\DeclareMathOperator{\res}{res}
\DeclareMathOperator{\Ind}{Ind}
\DeclareMathOperator{\Sym}{Sym}
\DeclareMathOperator{\ord}{ord}
\DeclareMathOperator{\lcm}{lcm}
\newtheorem{theorem}{Theorem}[section]
\newtheorem{lemma}[theorem]{Lemma}
\newtheorem{proposition}[theorem]{Proposition}
\newtheorem{conjecture}[theorem]{Conjecture}
\begin{document}

\title{About a class of Calabi-Yau differential equations}
\author{Gert Almkvist, Michael Bogner \& Jes\'us Guillera}

\date{}

\begin{abstract}
We explain an experimental method to find CY-type differential equations of order $3$ related to modular functions of genus zero. We introduce a similar class of Calabi-Yau differential equations of order $5$, show several examples and make a conjecture related to some geometric invariants. We finish the paper with a few examples of seven order.
\end{abstract}

\maketitle

\section{Introduction}

Let $M_z$ be a family of Calabi-Yau $n$-folds parameterized by a complex variable $z \in \mathbb{P}^1(\mathbb{C})$. Then periods of the unique holomorphic differential $n$ form on $M_z$ satisfy a linear differential equation which is called differential Calabi-Yau equation. The purely algebraic counterparts of these equations with respect to the characterisation given in \cite[Section 3]{BogCY} are called of CY-type. CY-type differential equations have very nice arithmetical properties which make them of great interest in number theory. For example, it is known that formulas for $1/\pi$ and $1/\pi^2$ are related to CY-type differential equations of orders $3$ and $5$ respectively. For the moment, a satisfactory explanation of the family of formulas for $1/\pi^2$ is not known. In this paper we introduce and study two interesting big families of Calabi-Yau differential equations of orders $3$ and $5$ respectively, and give a few examples of $7$ order.

\section{Duality}

For a differential operator $L=\sum_{i=0}^n a_i \partial^i \in \mathbb{C}(z)[\partial]$, its \textit{dual} is given by 
\[ L^{\vee}=\sum_{i=0}^n (-1)^i \partial^i a_i, \] 
where $\partial^k \alpha$ is the operator
\[
\partial^k \alpha = \sum_{j=0}^k \binom{k}{j} \frac{\partial^j \alpha}{\partial z^j} \partial^{k-j},
\]
which comes from $\partial \alpha = \partial \alpha / \partial z + \alpha \partial,$ by induction.
The properties of duality are \cite{Sch}:
\[
{L^{\vee \vee}}=L, \quad (L_1 L_2)^{\vee}=L_2^{\vee} L_1^{\vee}, \quad (f(z))^{\vee}=f(z). 
\]
By one of their defining properties, CY-type equations $L$ of order $n$ are 
\textit{self-dual}, i.e. there is a function $0\neq\alpha\in\mathbb{C}(z)$ such that $L\alpha=(-1)^n\alpha L^{\vee}$. 
As for any function $f(z)$, the differential equation $f(z)Ly=0$ has the same solutions than $Ly=0$, to be coherent we prove the following lemma:
\begin{lemma}
If $L$ satisfies the CY or self-duality condition with $\alpha=\beta$ then $f(z)L$ satisfies the CY or self-duality condition with $\alpha=\beta f(z)$.
\end{lemma}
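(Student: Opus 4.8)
The plan is to work entirely inside the (noncommutative) ring of differential operators $\mathbb{C}(z)[\partial]$, where a rational function $g=g(z)$ is identified with the operator ``multiplication by $g$''; then all the expressions $L\alpha$, $\alpha L^\vee$, $fL$, $\beta f$ appearing in the statement are honest compositions of operators, and the (self-)duality condition $L\alpha=(-1)^n\alpha L^\vee$ is literally an identity in that ring. The only ingredients needed are the three properties of duality recalled above, namely $L^{\vee\vee}=L$, $(L_1L_2)^\vee=L_2^\vee L_1^\vee$, $(g(z))^\vee=g(z)$, together with the trivial remark that two multiplication operators commute, $gh=hg$ for $g,h\in\mathbb{C}(z)$, even though neither commutes with $\partial$.

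First I would compute the dual of the new operator: by anti-multiplicativity of $\vee$ and $f^\vee=f$ one gets $(fL)^\vee=L^\vee f^\vee=L^\vee f$. Next I would expand the left-hand side of the desired identity for $fL$ with the proposed factor $\beta f$, writing $(fL)(\beta f)=f\,(L\beta)\,f$, and then substitute the hypothesis $L\beta=(-1)^n\beta L^\vee$ to obtain $(fL)(\beta f)=(-1)^n f\,\beta\,L^\vee f$. Since $f$ and $\beta$ are both multiplication operators they commute, so $f\beta=\beta f$, and hence $(fL)(\beta f)=(-1)^n(\beta f)\,(L^\vee f)=(-1)^n(\beta f)(fL)^\vee$, which is precisely the self-duality condition for $fL$ with $\alpha=\beta f$. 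The same chain of equalities, verbatim, covers whichever precise normalization one adopts for the ``CY condition'', since it only uses that one may push the scalar $f$ across $L$ via the hypothesis and across $\beta$ via commutativity.

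I do not expect a genuine obstacle here: the lemma is really a bookkeeping statement asserting that left-multiplying $L$ by $f$ while correspondingly replacing $\alpha$ by $\alpha f$ is compatible with all three duality properties. The only point demanding a little care is tracking the order of composition (the map $\vee$ reverses it) and remembering that $\beta$ and $f$, being functions, commute with one another as operators. I would also add the remark that applying the statement with $f$ replaced by $1/f$ and $\beta$ by $\beta f$ gives the converse, so that the (self-)duality property, and its associated $\alpha$ up to a nonzero rational factor, depends only on the differential equation $Ly=0$ and not on the particular operator $L$ chosen to represent it --- which is exactly the coherence asserted before the lemma.
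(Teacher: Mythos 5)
Your proof is correct and is essentially the same as the paper's: both multiply the identity $L\beta=(-1)^n\beta L^{\vee}$ by $f$ on the left and on the right, use $(fL)^{\vee}=L^{\vee}f$, and (implicitly in the paper, explicitly in your write-up) the commutativity $f\beta=\beta f$ of multiplication operators. Your added remark about applying the lemma with $1/f$ to get the converse is a small but worthwhile bonus.
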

\begin{proof}
In the self dual identity
\[
L \beta = (-1)^n \beta L^{\vee},
\]
we introduce $f(z)$ to the left and to the right, and we have
\[
f(z) L \beta f(z) = (-1)^n \beta f(z) L^{\vee} f(z),
\]
Hence
\[
\big(f(z) L \big) \beta f(z) = (-1)^n \beta f(z) \big(f(z) L \big)^{\vee},
\]
which proves that $f(z)L$ satisfies the sef-duality condition with $\alpha=\beta f(z)$.
\end{proof}
For a monic operator $L=\partial^n+\sum_{i=0}^{n-1}a_i\partial^i$, we see that 
\[ 
L \alpha = \alpha \partial^n+(n \alpha'+a_{n-1}\alpha)\partial^{n-1}+\cdots, \quad \alpha L^{\vee}=\alpha \partial^n-\alpha a_{n-1} \partial^{n-1}+\cdots,
\]
and we find that $\alpha'=-2a_{n-1}\alpha/n$. If e.g. $n=4$, then $\alpha'=-2/3 a_3 \alpha$ and identifying the coefficients of $\partial$ of $L\alpha =\alpha L^{\vee}$, we see that
\[
4\alpha'''+3 a_3 \alpha'' + 2a_2 \alpha' + a_1 \alpha = \alpha(-a_3''+2a_2'-a_1).
\]
From $\alpha'=-1/2 \, a_3 \, \alpha$, we can get $\alpha''$ and $\alpha'''$. Replacing these values in the identity above we arrive at the following unique independent relation:
\[ a_1=\frac12 a_2a_3-\frac18 a_3^3+a'_2-\frac34 a_3a'_3-\frac12 a''_3, \] 
see also \cite[Proposition 2.1]{AlZu}. In general for a monic operator of order $n$, identifying the coeficients of $\partial^k$ in the self-dual identity, we get the relations
\[
CY(n,k)=\sum_{j=k}^n \binom{j}{k} \left\{ a_j \frac{\alpha^{(j-k)}}{\alpha}-(-1)^{n-k} a_j^{(j-k)} \right\}=0,
\]
for $k=0,\, 1, \dots n-3$. The $CY(n,k)$ for even $n-k$ follows from $CY(n,k-1)$ by differentiation, so there are $\lfloor (n-1)/2 \rfloor$ relations. Here is a Maple program for computing $CY(n,k)$:
\begin{verbatim}

Alpha:=proc(n,k) local j,G; G(0):=1; 
for j to k do 
G(j):=simplify(expand(diff(G(j-1),z)-2/n*a[n-1](z)*G(j-1)));
od; end; 

CY:=proc(n,k) 
simplify(expand(binomial(n,k)*Alpha(n,n-k)+a[k](z)-(-1)^(n-k)*a[k](z)
+add(binomial(j,k)*(a[j](z)*Alpha(n,j-k)
-(-1)(n-j)*diff(a[j](z),z$(j-k))), j=k+1..n-1))); 
end;

\end{verbatim}

For operators in $\mathbb{C}[z][\vartheta]$ we find the following formula for its dual:
\begin{lemma}
The dual of $\mathcal{D}=\sum_{k=0}^mz^kP_k(\vartheta)$ for $P_k(\vartheta)\in\mathbb{C}[\vartheta]$ is given by
\[\mathcal{D}^{\vee}=\sum_{k=0}^mz^kP_k(-\vartheta-k-1).\] 
\end{lemma}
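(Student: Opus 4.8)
The plan is to reduce everything to the formal properties of duality recalled above — additivity, the anti-multiplicativity $(L_1L_2)^\vee=L_2^\vee L_1^\vee$, and the fact that $(\cdot)^\vee$ fixes $\mathbb{C}(z)$ pointwise — together with a single explicit computation, namely the dual of $\vartheta=z\partial$.

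First I would note that $(\cdot)^\vee$ is $\mathbb{C}$-linear: additivity in the coefficients is immediate from the defining sum $L^\vee=\sum_i(-1)^i\partial^i a_i$, and scalars lie in $\mathbb{C}(z)$ and are therefore fixed. Hence it suffices to dualize one term $z^kP_k(\vartheta)$, and by anti-multiplicativity together with $(z^k)^\vee=z^k$ this equals $P_k(\vartheta)^\vee\,z^k$. Next I would compute $\vartheta^\vee$: from the definition $\partial^\vee=-\partial$, and with $z^\vee=z$, anti-multiplicativity gives $\vartheta^\vee=(z\partial)^\vee=\partial^\vee z^\vee=-\partial z=-(z\partial+1)=-\vartheta-1$, using the commutation relation $\partial z=z\partial+1$. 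Since $(\cdot)^\vee$ reverses products, $(\vartheta^j)^\vee=(\vartheta^\vee)^j=(-\vartheta-1)^j$, so by linearity $P_k(\vartheta)^\vee=P_k(-\vartheta-1)$ for every polynomial $P_k$; the substitution is legitimate because $-\vartheta-1$ is itself a polynomial in the single operator $\vartheta$.

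Finally I would push the factor $z^k$ back to the left of this polynomial using the standard intertwining identity $f(\vartheta)\,z^k=z^k\,f(\vartheta+k)$, valid for any $f\in\mathbb{C}[\vartheta]$ (it reduces to $\vartheta z=z(\vartheta+1)$ and follows by a double induction on $k$ and on $\deg f$). Applying this to $f=P_k(-\,\cdot\,-1)$ turns $P_k(-\vartheta-1)\,z^k$ into $z^k\,P_k\big(-(\vartheta+k)-1\big)=z^k\,P_k(-\vartheta-k-1)$; summing over $k$ gives the claimed formula $\mathcal{D}^\vee=\sum_{k=0}^m z^kP_k(-\vartheta-k-1)$.

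The argument is essentially bookkeeping, so there is no genuine obstacle; the only point needing care is respecting the non-commutativity — applying $\partial z=z\partial+1$ and $f(\vartheta)z^k=z^kf(\vartheta+k)$ in the correct order and on the correct side. One could alternatively expand $z^kP_k(\vartheta)$ directly in powers of $\partial$ and dualize term by term, but the factorization route above is shorter and conceptually cleaner.
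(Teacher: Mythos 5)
Your proof is correct and follows essentially the same route as the paper: both rest on the anti-multiplicativity of the dual, the computation $(z\partial)^{\vee}=-\partial z$ giving $-\vartheta-1$, and the intertwining relation $\vartheta z^k=z^k(\vartheta+k)$. The only cosmetic difference is that you dualize $\vartheta$ alone and then push $z^k$ through $P_k(-\vartheta-1)$ in one step, whereas the paper starts from $(z^k\vartheta)^{\vee}$ and peels off factors of $-\vartheta-1$ one at a time.
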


\begin{proof}
By the definition of the dual,
\[\left(z^k\vartheta\right)^{\vee}=\left(z^{k+1}\partial\right)^{\vee}=-\partial z^{k+1}=-z^{k+1}\partial-(k+1)z^k=z^k(-\vartheta-k-1)\]
holds. As taking duals gives rise to a ring automorphism of $\mathbb{C}[z][\partial]$, we have
\begin{align}
(z^k \vartheta^n)^{\vee} &=(\vartheta^{n-1})^{\vee} (z^k \vartheta)^{\vee} = (-\vartheta-1)^{n-1} z^k(-\vartheta-k-1) \nonumber \\
&= (-\vartheta-1)^{n-2} z^k (-\vartheta-k-1)^2=\cdots=z^k(-\vartheta-k-1)^n. \nonumber
\end{align}
This gives the claim.
\end{proof}

\begin{lemma}\label{monic-nomonic}
If $\mathcal{D}$ is the operator
\[ \mathcal{D}=\sum_{k=0}^m z^k P_k(\vartheta)=\sum_{i=0}^n b_i(z) \partial^i=b_n(z) L, \]
where $L$ is monic; then the following statements are equivalent:
\begin{enumerate}
\item $\mathcal{D}$ satisfies the CY condition with $\alpha=z$.
\item $L$ satisfies the CY condition with $\alpha=z/b_n$.
\end{enumerate}
\end{lemma}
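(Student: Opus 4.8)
The plan is to deduce both implications directly from the first lemma of Section~2 --- the one asserting that if $L$ satisfies the CY or self-duality condition with $\alpha=\beta$, then $f(z)L$ satisfies it with $\alpha=\beta f(z)$ --- applied once with $f=b_n$ and once with $f=1/b_n$. Before doing so I would record the one point that needs a word: $b_n\in\mathbb{C}(z)$ is nonzero, being the leading coefficient of the nonzero operator $\mathcal{D}$, so $b_n$ and its reciprocal are both legitimate elements of $\mathbb{C}(z)$, and the relation $\mathcal{D}=b_nL$ may be inverted to $L=b_n^{-1}\mathcal{D}$.

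For the implication $(b)\Rightarrow(a)$ I would assume $L$ satisfies the CY condition with $\alpha=z/b_n$ and apply that first lemma with $f=b_n$ and $\beta=z/b_n$; it yields that $b_nL=\mathcal{D}$ satisfies the CY condition with $\alpha=(z/b_n)\,b_n=z$. For $(a)\Rightarrow(b)$ I would assume $\mathcal{D}$ satisfies the CY condition with $\alpha=z$, write $L=b_n^{-1}\mathcal{D}$, and apply the same lemma with $f=b_n^{-1}$ and $\beta=z$; it yields that $L$ satisfies the CY condition with $\alpha=z\cdot b_n^{-1}=z/b_n$. Together these give the stated equivalence.

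If one prefers a self-contained argument instead of quoting the first lemma, the identical computation can be written out: from $(L_1L_2)^{\vee}=L_2^{\vee}L_1^{\vee}$ and $(f(z))^{\vee}=f(z)$ one gets $\mathcal{D}^{\vee}=(b_nL)^{\vee}=L^{\vee}b_n$, and then the chain $\mathcal{D}\,z=b_nL\,(z/b_n)\,b_n=(-1)^n b_n\,(z/b_n)L^{\vee}b_n=(-1)^n z\,\mathcal{D}^{\vee}$, read forwards and backwards, is exactly the two implications. I do not expect a genuine obstacle here; the only subtlety is that the first lemma is phrased as a one-way passage $L\mapsto f(z)L$, so the forward implication must be obtained by applying it to $\mathcal{D}$ with multiplier $f=1/b_n$ (which is why the nonvanishing of $b_n$ is noted), rather than by attempting to run the lemma in reverse. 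Everything else is substitution.
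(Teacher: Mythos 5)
Your proof is correct and essentially matches the paper's: the paper establishes the equivalence by the direct chain $\mathcal{D}z=(-1)^nz\mathcal{D}^{\vee}\Leftrightarrow b_nLz=(-1)^nz L^{\vee}b_n\Leftrightarrow Lzb_n^{-1}=(-1)^nzb_n^{-1}L^{\vee}$, which is exactly the computation you give as your self-contained alternative. Routing the two implications through the earlier $f(z)L$ lemma with $f=b_n$ and $f=b_n^{-1}$ is only a repackaging of that same manipulation, and your observation that $b_n\neq 0$ is invertible in $\mathbb{C}(z)$ is the one hypothesis the paper leaves implicit.
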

\begin{proof}
We have
\[
\big[ Dz = (-1)^n z D^{\vee} \big] \Leftrightarrow 
\big[ b_n L z = (-1)^n z (b_n L)^{\vee} \big] \Leftrightarrow 
\big[ b_n L z = (-1)^n z L^{\vee} b_n \big].
\]
Hence, introducing $b_n^{-1}$ to the left and the right of both sides, we see that it is equivalent to
\[
L z b_n^{-1} = (-1)^n z b_n^{-1} L^{\vee},
\]
which proves the lemma.
\end{proof}
\noindent If $(a)$ or its equivalent $(b)$ holds, then we say that $\mathcal{D}$ or $L$ satisfy the $YY$ (Yifan-Yang) condition. 

\begin{lemma}\label{CYrel}
For $\mathcal{D}=\sum_{k=0}^mz^kP_k(\vartheta)$, we have $\mathcal{D}z=(-1)^nz\mathcal{D}^{\vee}$ if and only if $P_k(\vartheta)=(-1)^nP_k(-\vartheta-k)$ for all $0\leq k\leq m$. 
\end{lemma}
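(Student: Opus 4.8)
The plan is to put both sides of the claimed identity into the normal form $\sum_j z^j Q_j(\vartheta)$ and then compare coefficients polynomial by polynomial. The only ingredient needed is the commutation rule in $\mathbb{C}[z][\vartheta]$, namely $\vartheta z=z(\vartheta+1)$, which by induction gives $\vartheta^{\,j} z=z(\vartheta+1)^j$ and hence $P(\vartheta)\,z=z\,P(\vartheta+1)$ for every $P\in\mathbb{C}[\vartheta]$.

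First I would rewrite the left-hand side. From $\mathcal{D}=\sum_{k=0}^m z^kP_k(\vartheta)$ and the commutation rule,
\[
\mathcal{D}z=\sum_{k=0}^m z^k P_k(\vartheta)z=\sum_{k=0}^m z^{k+1}P_k(\vartheta+1).
\]
For the right-hand side I would invoke the preceding lemma, which gives $\mathcal{D}^{\vee}=\sum_{k=0}^m z^kP_k(-\vartheta-k-1)$, and therefore
\[
z\,\mathcal{D}^{\vee}=\sum_{k=0}^m z^{k+1}P_k(-\vartheta-k-1).
\]
Both expressions are now written as $\mathbb{C}$-linear combinations of the operators $z^{k+1}\vartheta^{\,j}$ with $k,j\ge0$.

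The key step is then the uniqueness of this normal form: the monomials $z^a\vartheta^{\,b}$ are linearly independent in $\mathbb{C}[z][\vartheta]\subset\mathbb{C}(z)[\partial]$, since $z^a\vartheta^{\,b}$ sends $z^N$ to $N^b z^{N+a}$, so that a vanishing operator forces all of its coefficients to vanish. Consequently $\mathcal{D}z=(-1)^n z\,\mathcal{D}^{\vee}$ holds if and only if, for each $0\le k\le m$, the polynomial identity $P_k(\vartheta+1)=(-1)^n P_k(-\vartheta-k-1)$ holds. Finally, performing the substitution $\vartheta\mapsto\vartheta-1$, which is legitimate because this is an identity of polynomials in the formal variable $\vartheta$, turns the last identity into $P_k(\vartheta)=(-1)^n P_k(-\vartheta-k)$, and the argument is reversible; this is exactly the asserted equivalence.

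I do not expect any serious obstacle: the computation is essentially forced once the dual formula from the previous lemma and the commutation rule are in hand. The only point deserving an explicit sentence is the uniqueness of the normal form, which licenses passing from the single operator identity to the family of polynomial identities in $\vartheta$; everything else is a one-line reindexing and substitution.
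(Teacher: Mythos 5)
Your proof is correct and follows essentially the same route as the paper's (much terser) argument: the commutation rule $P(\vartheta)z=zP(\vartheta+1)$ combined with the dual formula $\mathcal{D}^{\vee}=\sum_k z^kP_k(-\vartheta-k-1)$ from the preceding lemma, followed by comparison of coefficients and the shift $\vartheta\mapsto\vartheta-1$. Your explicit remark on the uniqueness of the normal form $\sum_j z^jQ_j(\vartheta)$ is a detail the paper leaves implicit, but nothing here diverges from its approach.
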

\begin{proof}
As for every polynomial $P$ we have $P(\vartheta)z=zP(\vartheta+1)$, the lemma follows.
\end{proof}
This lemma provides the simplest way to check that an operator satisfies the Yifan Yang (YY) condition. It implies that for order $3$ and degree $m$ the operator $\mathcal{D}$ satisfying the $YY$ condition, is of the form
\begin{equation}\label{general-operator}
\mathcal{D} = \vartheta^3 + \sum_{k=1}^{m} z^k \left(d_{k,3} \left(\vartheta + \frac{k}{2}\right)^3 + d_{k,1} \left(\vartheta + \frac{k}{2}\right) \right),
\end{equation}
and that for order $4$ and degree $m$ is of the form
\begin{equation}\label{general-operator-4}
\mathcal{D} = \vartheta^4 + \sum_{k=1}^{m} z^k \left(d_{k,4} \left(\vartheta + \frac{k}{2}\right)^4 + d_{k,2} \left(\vartheta + \frac{k}{2}\right)^2+ d_{k,0} \right),
\end{equation}
In general, for odd order $n$, we have
\begin{equation}\label{general-operator-odd}
\mathcal{D} = \vartheta^n + \sum_{k=1}^{m} z^k \sum_{j=0}^{\frac{n-1}{2}} d_{k,j} \left( \vartheta+\frac{k}{2} \right)^{2j+1},
\end{equation}
and if $n$ is even
\begin{equation}\label{general-operator-even}
\mathcal{D} = \vartheta^n + \sum_{k=1}^{m} z^k \sum_{j=0}^{\frac{n}{2}} d_{k,j} \left( \vartheta+\frac{k}{2} \right)^{2j},
\end{equation}
where $d_{k,j} \in \mathbb{C}$. Similar to \cite[Sect. 2]{AlGu2}, rewriting  $\mathcal{D}=\sum_{i=0}^n c_i(z)\vartheta^i\in\mathbb{C}[z][\vartheta]$, we define
\[ P(z):=\exp\left(\frac{2}{n}\int \frac{c_{n-1}(z)}{zc_{n}(z)}dz\right),\ P(0):=1. \] 
For operators of orders $n=3$ or $n=5$, the function $P(z)$ is essential to derive formulas for $1/\pi$ and $1/\pi^2$ \cite{AlGu2}.

\begin{theorem}
If $\mathcal{D}$ be an operator such that $\mathcal{D} z=(-1)^nz\mathcal{D}^{\vee}$, then $P(z)$ is a polynomial. In fact $P(z)=c_n(z)$.
\end{theorem}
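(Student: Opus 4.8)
The plan is to reduce everything to the single differential identity
\[
n\,z\,c_n'(z)=2\,c_{n-1}(z).
\]
Granting this, the integrand defining $P$ simplifies to
\[
\frac{2}{n}\,\frac{c_{n-1}(z)}{z\,c_n(z)}=\frac{c_n'(z)}{c_n(z)}=\big(\log c_n(z)\big)',
\]
so that $P(z)$ and $c_n(z)$ have the same logarithmic derivative; since the $z^0$-part of $\mathcal{D}$ is $\vartheta^n$ one has $c_n(0)=1=P(0)$, which pins down the constant of integration and gives $P(z)=c_n(z)$. This is a polynomial because $\mathcal{D}\in\mathbb{C}[z][\vartheta]$. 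The identity also forces $c_{n-1}(0)=0$, so that $c_{n-1}(z)/(z\,c_n(z))$ is regular at $z=0$ and the prescription $P(0)=1$ is meaningful in the first place.

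To establish the identity I would pass to the representation $\mathcal{D}=\sum_{k=0}^m z^k P_k(\vartheta)$ and write $P_k(\vartheta)=\sum_{j=0}^n p_{k,j}\vartheta^j$, so that $c_i(z)=\sum_{k=0}^m p_{k,i}\,z^k$ for each $i$. By Lemma~\ref{CYrel}, the hypothesis $\mathcal{D}z=(-1)^n z\,\mathcal{D}^\vee$ is equivalent to $P_k(\vartheta)=(-1)^n P_k(-\vartheta-k)$ for all $k$. Expanding the right-hand side with $(-\vartheta-k)^j=(-1)^j\sum_{l=0}^j\binom{j}{l}k^{j-l}\vartheta^l$ and comparing the coefficients of $\vartheta^{n-1}$ on both sides — only the terms $j=n-1$ and $j=n$ contribute on the right — gives $p_{k,n-1}=-p_{k,n-1}+n\,k\,p_{k,n}$, i.e.\ $2\,p_{k,n-1}=n\,k\,p_{k,n}$ for every $k$. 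Multiplying by $z^k$ and summing over $k$ then yields $2\,c_{n-1}(z)=n\sum_{k} k\,p_{k,n}\,z^k=n\,z\,c_n'(z)$, which is the desired identity. (Comparing instead the coefficients of $\vartheta^n$ gives only the vacuous $p_{k,n}=p_{k,n}$, consistent with there being no constraint at that order.)

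The one delicate point is the coefficient extraction in the last step: on the right-hand side of $P_k(\vartheta)=(-1)^nP_k(-\vartheta-k)$ the monomial $\vartheta^{n-1}$ receives a contribution from the $\vartheta^{n-1}$-term of $P_k$ and another from the subleading term of $(-\vartheta-k)^n$, and one must check that the signs combine as $(-1)^n(-1)^{n-1}=-1$ on the former and $(-1)^n(-1)^n=+1$ on the latter, so that the two $p_{k,n-1}$-contributions reinforce (producing the factor $2$) rather than cancel. Everything else is a formal manipulation of polynomials in $\vartheta$ and power series in $z$.
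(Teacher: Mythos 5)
Your proof is correct and follows essentially the same route as the paper: both arguments reduce to the identity $2\,c_{n-1}(z)=n\,z\,c_n'(z)$ (the paper reads it off from the normal form $\mathcal{D}=\vartheta^n+\sum_k z^k d_{k,n}(\vartheta+k/2)^n+\mathcal{O}(\vartheta^{n-2})$ implied by Lemma~\ref{CYrel}, while you extract it by comparing $\vartheta^{n-1}$-coefficients in $P_k(\vartheta)=(-1)^nP_k(-\vartheta-k)$, which is the same computation), after which the integrand becomes $(\log c_n)'$ and the normalization $P(0)=1=c_n(0)$ finishes the argument.
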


\begin{proof}
We see that
\[
\mathcal{D}=\vartheta^n+\sum_{k=1}^m z^k d_{k,n} \left( \vartheta+\frac{k}{2} \right)^n+\mathcal{O}(\vartheta)^{n-2}=\vartheta^n+\sum_{k=1}^m z^k d_{k,n} \vartheta^n+\sum_{k=1}^n z^k d_{k,n} \frac{nk}{2} \vartheta^{n-1}+\cdots.
\]
And by definition of $P(z)$ we get
\[
P(z)=\exp\left(\frac{1}{n}\int \frac{ \sum_{k=1}^m d_{k,n} \, nk z^{k-1}}{1+\sum_{k=1}^m d_{k,n} \, z^k} \right)=1+\sum_{k=1}^m d_{k,n} z^k.
\]
Hence $P(z)=c_n(z)$.
\end{proof}
We define the Yifan-Yang class $\mathcal{YY}_n$ as the class of those operators $\mathcal{D}$ of order $n$ satisfying the $YY$ condition, and also all the other properties of Calabi-Yau operators. Hence, we see that $\mathcal{YY}_n$ is a subclass of $CY_n$.

\section{CY-type differential equations of order $3$}

Let $w_0$, $w_1$ and $w_2$ be the fundamental solutions of $\mathcal{\mathcal{D}}w=0$. As usual we define $q=\exp(w_1/w_0)$, which we can invert to get $z$ as a series of powers of $q$. The function $z(q)$ is the mirror map. We will also need the function $J(q)=1/z(q)$, written as a series of powers of $q$. The mirror map of a Calabi-Yau differential equation has integer coefficients. See \cite[Sect. 2.1]{Zu} for all the necessary conditions.

\subsection{The level}

Let $z_c$ be the smallest positive root of $P(z)$, usually the radius of convergence of the holomorphic solution $w_0$ of $\mathcal{D}w=0$, and suppose that $q_c$ is a root of $dz/dq$ such that $z_c=z(q_c)$. Then $\tau_c=\tau(q_c)$ is given by $q_c=\exp(-\pi \tau_c)$. In this situation we define the level $\ell$ of the differential equation as $4/\tau_c^2$. It is a conjecture that the level is always an integer number.

\subsection{Differential operators in the class $\mathcal{YY}_3$}

Prototypes of CY-type differential equations are those for which $J(q)=1/z(q)$ is one of the Moonshine functions stated in \cite{CoNo}. Most of the projective normal forms of second order equations of this type were found by the experimental methods stated in \cite{LiWi}. In this section, we illustrate a method to produce $\mathcal{YY}_3$ equations out of them. Therefore, we first recall some further operations on differential equations.
Given a differential equation $L\in\mathbb{C}(z)[\partial]$ and an algebraic function $g\in\mathbb{C}(z)^{alg}$, we define their \textit{tensor product} by 
\[ 
L\otimes g=gLg^{-1}.
\]
Moreover, the \textit{symmetric square} of $L$ is given by the monic differential equation $\Sym^2(L)$ of minimal degree w.r.t. $\partial$ whose solutions contain the set $\{y_1y_2\mid L(y_1)=L(y_2)=0\}$.
If especially $L=\partial^2+a_1(z)\partial+a_0(z)$ is irreducible, \cite[Proposition 4.26]{Put} implies that
\begin{equation}\label{symsquare}\Sym^2(L)=\partial^3+a_1(z)+(4a_0(z)+a_1'(z)+2a_1(z)^2)\partial+4a_1(z)a_0(z)+2a'_0(z).\end{equation} 
Moreover, we know by \cite[Corollary 4.7]{BogCY} that the symmetric square of a CY-type operator of order two is  - up to a conjugation with an algebraic function $g$ which is holomorphic at $z=0$  - a CY-type operator of order three. Moreover, both operations do not change $J(q)$. Hence, our method is to start with an operator $L=\partial^2+Q$ taken from \cite{LiWi} and to look for an algebraic fuction $g\in\mathbb{C}(z)^{alg}$ such that $\Sym^2(L)\otimes g$ gives rise to a $\mathcal{YY}_3$ operator.

All of the operators $L=\partial^n+\sum_{i=0}^{n-1}a_i(z)\partial^i \in \mathbb{C}(z)[\partial]$ we are dealing with are \textit{fuchsian}, i.e. there are pairwise distinct $s_1,\dots,s_r\in\mathbb{C}$ such that
\begin{equation}\label{fuchs}
a_i=\frac{p_i}{\prod_{j=1}^{r}(z-s_i)^{n-i}},\ \deg(p_i)=(r-2)(n-i).
\end{equation} 
The points $s_1,\dots,s_r,\infty$ are called the \textit{singularities} of $L$. 
The \textit{indicial equation} of a monic operator $L=\sum_{i=0}^n a_i(z) \partial^i$ at $z=s\in\mathbb{P}^1$ is given by

\begin{equation}\label{Indi}
\Ind_{s}(L)=\sum_{k=0}^n \res_{z=s}\left((z-s)^k a_{n-k}(z)\right)\prod_{j=0}^{n-k-1}(T-j).
\end{equation} 
Its roots $(e_1,\dots,e_n)$ are called the \textit{exponents} of $L$ at $z=s$. Note, that the exponents of $L\otimes g$ at $z=p$ are precisely the exponents of $L$ at $z=s$ shifted by the order of $g$ at $z=s$. For $n=3$, the inditial equation reads
\begin{equation}\label{Indi3}
\Ind_{s}(L)=T(T-1)(T-2)+A_s T(T-1)+B_s T+C_s,
\end{equation} 
where 
\[ 
A_s=\res_{z=s} a_2(z), \quad B_s=\res_{z=s} (z-s)a_1(z), \quad C_s=\res_{z=s} (z-s)^2a_0(z).
\]

To produce operators of $\mathcal{YY}_3$ type, we use the following

\begin{lemma}\label{Methode}
Consider a CY-type differential operator $\mathcal{D}=\sum_{i=0}^mz^iP_i(\vartheta)$ of order three, such that the exponents of $\mathcal{D}$ at $z=0$ are $(0,0,0)$ and the exponents at each finite singularity $s\in\mathbb{C}$ are either $\left(0,\frac{1}{2},1\right)$ or $(-a,0,a)$ for an $a\in(0,1)\setminus\{1/2\}$. Then $\mathcal{D}$ is in the $\mathcal{YY}_3$ class. 
\end{lemma}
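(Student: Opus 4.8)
The plan is to verify the $YY$ identity $\mathcal D z=(-1)^3z\,\mathcal D^{\vee}$; since $\mathcal D$ is already assumed to be of CY-type, this is all that is missing for $\mathcal D\in\mathcal{YY}_3$. Write $\mathcal D=\sum_{i=0}^3c_i(z)\vartheta^i$ with $c_3(0)=1$ (i.e.\ $P_0=\vartheta^3$), and $\mathcal D=b_3(z)L$ with $L$ monic, $b_3(z)=c_3(z)z^3$. Being a CY-type operator, $L$ is self-dual, $L\alpha=-\alpha L^{\vee}$, and as explained above $\alpha$ is determined up to a constant by $\alpha'/\alpha=-\tfrac23a_2$; computing $a_2=(3c_3+c_2)/(c_3z)$ gives $\alpha=C\,z^{-2}/P(z)$ with $P$ as defined above. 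By Lemma~\ref{monic-nomonic}, $\mathcal D$ satisfies the $YY$ condition exactly when $L$ is self-dual with $\alpha=z/b_3=z^{-2}/c_3$; by uniqueness of $\alpha$, and because $P(0)=c_3(0)=1$, this is equivalent to the single identity $P(z)=c_3(z)$. So I would reduce the whole statement to proving $P=c_3$.

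To get a handle on $P$ I would first use self-duality to compute the divisor of $\alpha$. The identity $L\alpha=-\alpha L^{\vee}$ gives $\ker L^{\vee}=\alpha^{-1}\ker L$, so at every point $s$ the exponents of $L^{\vee}$ are those of $L$ shifted by $-\ord_s\alpha$; combined with the classical fact that the exponents of $L^{\vee}$ at $s$ are $\{\,2-e:\ e\text{ an exponent of }L\text{ at }s\,\}$ (order three), this says that the exponent set of $L$ at $s$ is symmetric about $1+\tfrac12\ord_s\alpha$, hence $\ord_s\alpha=\tfrac23(e_1^{(s)}+e_2^{(s)}+e_3^{(s)})-2$. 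The hypotheses then give $\ord_0\alpha=-2$ (from $(0,0,0)$), $\ord_s\alpha=-1$ at each finite singularity with exponents $(0,\tfrac12,1)$, and $\ord_s\alpha=-2$ at each one with exponents $(-a,0,a)$. Since $\alpha'/\alpha=-\tfrac23a_2$ has poles only at $z=0$ and the zeros of $c_3$, we obtain $\alpha=C\,z^{-2}\prod_s(z-s)^{\ord_s\alpha}$, so $P$ is a polynomial vanishing to order $1$ at each $(0,\tfrac12,1)$-singularity, to order $2$ at each $(-a,0,a)$-singularity, and nowhere else.

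Finally I would compare this divisor with that of $c_3$, i.e.\ show that a finite singularity $s\neq0$ is a simple zero of $c_3$ exactly when its exponents are $(0,\tfrac12,1)$ and a double zero exactly when they are $(-a,0,a)$ --- in particular that $c_3$ has no zero of higher order. This is a local computation at $s$: writing $\mu=\ord_s c_3$, $c_3=(z-s)^{\mu}\gamma_3$ with $\gamma_3(s)\neq0$, and using $\vartheta^i(z-s)^{\rho}=s^i\rho(\rho-1)\cdots(\rho-i+1)(z-s)^{\rho-i}+O((z-s)^{\rho-i+1})$, one finds the indicial polynomial of $\mathcal D$ at $s$ to be $s^3\gamma_3(s)\,\rho(\rho-1)(\rho-2)$ plus lower terms controlled by $\ord_s c_2$ and $\ord_s c_1$, which the Fuchs condition bounds below by $\mu-1$ and $\mu-2$ respectively. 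Matching this against the two admissible exponent sets pins $\mu$ down to $1$ or $2$; and a zero of $c_3$ of excess order --- equivalently a double zero carrying the exponents $(0,\tfrac12,1)$ --- would force $c_2(s)=c_1(s)=0$ by the Fuchs bounds, and then $c_0(s)=0$ as well, because $0$ is an exponent at $s$, so a holomorphic solution $y$ of $\mathcal Dy=0$ with $y(s)\neq0$ exists and evaluating $\mathcal Dy=0$ at $z=s$ with $c_3(s)=c_2(s)=c_1(s)=0$ leaves $c_0(s)y(s)=0$. This makes $z-s$ a common factor of all the coefficients of $\mathcal D$, which is excluded because a CY-type operator is taken in primitive (reduced) form. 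Hence $P=c_3$ and $\mathcal D\in\mathcal{YY}_3$. I expect this last step --- the local indicial analysis and, above all, the use of primitivity to rule out excess vanishing of the leading coefficient --- to be the real obstacle.
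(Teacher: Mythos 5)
Your overall strategy is the same as the paper's: reduce the $YY$ condition to the identity $P(z)=c_3(z)$ via Lemma \ref{monic-nomonic} and the uniqueness of $\alpha$, then match the order of vanishing of $P$ and of $c_3$ at each finite singularity; your computation of $\ord_s P$ (order $1$ at a $(0,\frac12,1)$-point, order $2$ at a $(-a,0,a)$-point) agrees with the paper's, which reads the same numbers off $\res_s a_2=3-\sum e_i$. The divergence, and the gap, is in the last step, where you must show $\ord_s c_3=1$ at a $(0,\frac12,1)$-singularity. Writing $\mu=\ord_s c_3$, primitivity gives $\mu=\max_i$ of the pole orders of the $a_i$ at $s$, and $A_s=\frac32$, $B_s=C_s=0$ force pole orders exactly $1$, at most $1$, at most $2$ for $a_2,a_1,a_0$; so everything hinges on excluding a double pole of $a_0$. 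The coefficient of $(z-s)^{-2}$ in $a_0$ is invisible to the indicial polynomial, so the exponents plus primitivity genuinely do not decide this: one can write down a primitive Fuchsian third-order operator with exponents $(0,\frac12,1)$ at $s$ and $\ord_s c_3=2$. Some quantitative use of self-duality is unavoidable here, and that is exactly what the paper supplies: it invokes the order-three self-duality (symmetric-square) relation $a_0=\frac13a_2a_1-\frac13a_2a_2'-\frac2{27}a_2^3+\frac12a_1'-\frac16a_2''$ and checks that, given $\res_sa_2=\frac32$ and $a_1$ having at most a simple pole, the $(z-s)^{-2}$ coefficient of the right-hand side vanishes.

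Your substitute argument --- that $\mu=2$ would force $c_3(s)=c_2(s)=c_1(s)=c_0(s)=0$, contradicting primitivity --- is not complete. First, $c_1(s)=0$ does not follow from the Fuchs bound $\ord_sc_1\ge\mu-2=0$; you need $B_s=0$ for that (minor, fixable). More seriously, the step $c_0(s)=0$ rests on the existence of a solution $y$ with $y,\vartheta y,\vartheta^2y,\vartheta^3y$ holomorphic at $s$ and $y(s)\neq0$. The exponents $0$ and $1$ at $s$ differ by an integer, so the exponent-$0$ Frobenius solution may a priori contain a term $h_1(z)\log(z-s)$ with $h_1$ the exponent-$1$ solution; in that case $\vartheta^2y$ and $\vartheta^3y$ blow up, and the non-logarithmic part of $\mathcal Dy=0$ evaluated at $z=s$ acquires extra contributions such as $c_3\cdot\vartheta h_1\cdot\vartheta^2\log(z-s)$, which are $O(1)$ and generically nonzero precisely when $\ord_sc_3=2$; you do not obtain $c_0(s)y(s)=0$. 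Ruling out the logarithm amounts to knowing that $L$ is the symmetric square of a second-order operator with non-resonant exponents $\{e,e+\frac12\}$ at $s$ --- that is, to the same self-duality input the paper uses explicitly. So either import the paper's relation for $a_0$, or justify the absence of logarithms from the symmetric-square structure; as written the argument does not close.
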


\begin{proof}
We have to show that $\mathcal{D}z=z\mathcal{D}^{\vee}$ holds. Rewriting $\mathcal{D}=\sum_{i=0}^3c_i(z)\vartheta^i$, we can assume without loss of generality that $c_3(0)\neq 0$ as the exponents of $\mathcal{D}$ at $z=0$ are $(0,0,0)$. 
Let
\[ L=\frac{1}{b_3(z)} \mathcal{D} = \partial^3+\sum_{i=0}^2 a_i(z)\partial^i \] 
with $a_i(z)=p_i(z)/q_i(z)$, $p_i(z),q_i(z)\in\mathbb{C}[z]$, $\gcd(p_i(z),q_i(z))=1$. Then
\[ z^3 a_3(z)=\lcm(q_0(z),q_1(z),q_2(z))=:l(z) \] 
and it suffices to prove that
\[ P(z)=\exp\left(\frac{2}{3}\int a_2(z)dz\right)=\frac{1}{\alpha}=\frac{b_3(z)}{z}=l(z). \] 
If the exponents of $\mathcal{D}$ at $z=s$ are $\left(-a,0,a\right)$, its indicial equation reads 
\[ \Ind_s(L)=T(T-1)(T-2)+3T(T-1)+(1-a^2)T. \] 
By (\ref{Indi3}) we see that $\res_{z=s}a_2(z)=3$  which implies $\ord_{z=s}(P(z))=2$. On the other hand, (\ref{Indi3}) also implies $\res_{z=s}(z-s)^2 a_0(z)=0$ and hence $\ord_{z=s}(l(z))=2$ by (\ref{fuchs}). 
If the exponents of $\mathcal{D}$ at $z=s$ are $\left(0,\frac{1}{2},1\right)$, we find that
\[ \Ind_s(L)=T(T-1)(T-2)+\frac{3}{2}T(T-1). \] 
Therefore, $\res_{z=s}a_2(z)=\frac{3}{2}$ and $\ord_{z=s}(P(z))=1$. As $L$ is a symmetric square, \ref{symsquare} implies the relation
\[ 
a_0(z)=\frac{1}{3}a_2(z)a_1(z)-\frac{1}{3}a_2(z)a'_2(z)-\frac{2}{27}a^3_2(z)+\frac{1}{2}a'_1(z)-\frac{1}{6}a''_2(z).
\] 
Using $\res_{z=s}a_2(z)=\frac{3}{2}$ and $\res_{z=s}(z-s)a_1(z)=0$ one directly checks that $-\ord_{z=s}a_0(z)=\ord_{z=s}l(z)=1$.
As the roots of both $l(z)$ and $P(z)$ are precisely the finite singularities of $L$ this gives the claim.
\end{proof}

Also note, that if $\mathcal{D}$ is as in Lemma \ref{Methode}, its number of terms equals the number of finite singularities with exponents $\left(0,\frac{1}{2},1\right)$ plus twice the number of finite singularities with exponents $\left(-a,0,a\right)$ for $a\in(0,1)\setminus\{1/2\}$.

\subsection{Example}

All in all, our method to produce $\mathcal{YY}_3$ operators works as follows:

\begin{itemize}
\item Take the symmetric square of an operator contained in \cite{LiWi}.
\item Apply a transformation $z/(cz+1)$ with $c\in\mathbb{Q}$ and conjugate with an algebraic function $g$ such that the number of finite singularities is minimal, $\mathcal{D}$ is as in Lemma \ref{Methode} and the number of finite singularities with exponents $\left(0,\frac{1}{2},1\right)$ is maximal. 
\end{itemize}

We illustrate the method for in the following example: Take the operator in \cite{LiWi} corresponding to the moonshine function $A70$. The finite singularities of its symmetric square $L$ are $z=0$, $z=\infty$ and the roots of the polynomial

\[ p(z):=(3z-1)(2z+1)(z+1)(2z^2-z+1)(4z^3+2z+1). \] 
The exponents of $L$ are $(1,1,1)$ at $z=0$, $(1/2,1,3/2)$ at each of the roots of $p(z)$ and $(-2,-1,0)$ at $z=\infty$. 
The singularity at infinity can be removed by taking a tensor product with a function which has order $2$ at this point. We apply the transformation $\varphi(z)=z/(1-z)$, which changes the singularities to be $z=0$, $z=\infty$ and the roots of
\[ \tilde{p}(z)=(z-1)(z+1)(4z-1)(4z^2-3z+1)(5z^3-z^2-z+1). \] 
Now, the exponents are $(1,1,1)$ at $z=0$, $(-2,-1,0)$ at $z=1$ and $(1/2,1,3/2)$ at all other roots pf $\tilde{p}(z)$ and $z=\infty$. According to Lemma \ref{Methode}, tensoring this operator with
\[ g(z)=\frac{(z-1)^{2}}{z\sqrt{(z+1)(4z-1)(4z^2-3z+1)(5z^3-z^2-z+1)}} \] 
yields the operator
\begin{align*}
&8\,{\vartheta}^{3}-4\,z \left( 2\,\vartheta+1 \right)  \left( 7\,{\vartheta}^{2}+7\,\vartheta+5 \right) +2\,{z}^{2} \left( \vartheta+1 \right)  \left( 56\,{\vartheta}^{2}+112\,\vartheta+97
\right)\\& +{z}^{3} \left( 2\,\vartheta+3 \right)  \left( 8\,{\vartheta}^{2}+24\,\vartheta+71 \right) -10\,{z}^{4} \left( \vartheta+2 \right)  \left( 44\,{\vartheta}^{2}+176\,\vartheta+245 \right)\\& + {z}^{5} \left( 2\,\vartheta+5 \right)  \left( 244\,{\vartheta}^{2}+1220\,\vartheta+1729 \right) + 128\,{z}^{6} \left( \vartheta+3 \right)  \left( {\vartheta}^{2}+6\,\vartheta+10 \right) \\&-320\,{z}^{7} \left( \vartheta+4 \right)  \left( \vartheta+3 \right)  \left( 2\,\vartheta+7 \right)
\end{align*}
which is in the $\mathcal{YY}_3$ class. 

There are two more possibilities to get $\mathcal{YY}_3$-operators of the same degree in $z$. Namely, applying the transformation $\varphi(z)=z/(1+3z)$ and tensoring by
\[g(z)=\frac{(3z+1)^2}{z\sqrt{\left( 5\,z+1 \right)  \left( 4\,z+1 \right)  \left( 8\,{z}^{2}+5\,z+1 \right)  \left( 49\,{z}^{3}+39\,{z}^{2}+11\,z+1 \right)}}\]
yields the operator
\begin{align*}
&8\,{\vartheta}^{3}+4\,z \left( 2\,\vartheta+1 \right)  \left( 25\,{\vartheta}^{2}+25\,\vartheta+19 \right) +14\,{z}^{2} \left( \vartheta+1 \right)  \left( 152\,{\vartheta}^{2}+304\,\vartheta+279
 \right)\\& +{z}^{3} \left( 2\,\vartheta+3 \right)  \left( 6280\,{\vartheta}^{2}+18840\,\vartheta+20311 \right) +2\,{z}^{4} \left( \vartheta+2 \right)  \left( 22340\,{\vartheta}^{2}+89360\,\vartheta+
110667 \right)\\& +{z}^{5} \left( 2\,\vartheta+5 \right)  \left( 48180\,{\vartheta}^{2}+240900\,\vartheta+336617 \right) +8\,{z}^{6} \left( \vartheta+3 \right)  \left( 14668\,{\vartheta}^{2}+88008\,
\vartheta+136865 \right)\\& +31360\,{z}^{7} \left( \vartheta+4 \right)  \left( \vartheta+3 \right)  \left( 2\,\vartheta+7 \right)
\end{align*}
where applying the transformation $\varphi(z)=z/(1+3z)$ and tensoring by
\[ 
g(z)=\frac{(2z-1)^2}{z\sqrt{\left( z-1 \right)  \left( 5\,z-1 \right)  \left( 8\,{z}^{2}-5\,z+1 \right)  \left( 4\,{z}^{3}+4\,{z}^{2}-4\,z+1 \right))}}
\]
yields the operator
\begin{align*}
& 8\,{\vartheta}^{3}-4\,z \left( 2\,\vartheta+1 \right)  \left( 15\,{\vartheta}^{2}+15\,\vartheta+11 \right) +2\,{z}^{2} \left( \vartheta+1 \right)  \left( 364\,{\vartheta}^{2}+728\,\vartheta+653
 \right)\\& -{z}^{3} \left( 2\,\vartheta+3 \right)  \left( 1140\,{\vartheta}^{2}+3420\,\vartheta+3589 \right) +32\,{z}^{4} \left( \vartheta+2 \right)  \left( 115\,{\vartheta}^{2}+460\,\vartheta+547
 \right)\\& -32\,{z}^{5} \left( 2\,\vartheta+5 \right)  \left( 35\,{\vartheta}^{2}+175\,\vartheta+229 \right) -24\,{z}^{6} \left( \vartheta+3 \right)  \left( 44\,{\vartheta}^{2}+264\,\vartheta+425
 \right)\\& +640\,{z}^{7} \left( \vartheta+4 \right)  \left( \vartheta+3 \right)  \left( 2\,\vartheta+7 \right)
\end{align*}

\subsection{Other examples}

Below we show some of the examples that we have found of degrees $3$ to $6$.

\subsubsection*{Degree $3$}

\begin{align}
\mathcal{D}_{11A}=\vartheta^3 & - 2z(2\vartheta+1)(5\vartheta^2+5\vartheta+2)+8z^2(\vartheta+1)(7\vartheta^2+14\vartheta+8) \nonumber \\
& - 22z^3(2\vartheta+3)(\vartheta^2+3\vartheta+2), \label{deq-11A}
\end{align}

\begin{align}
\mathcal{D}_{14A}=\vartheta^3 & - z(2\vartheta+1)(11\vartheta^2+11\vartheta+5)+z^2(\vartheta+1)(121\vartheta^2+242\vartheta+141) \nonumber \\
& - 98z^3(2\vartheta+3)(\vartheta^2+3\vartheta+2), \label{deq-14A}
\end{align}

\begin{align}
\mathcal{D}_{15A}=\vartheta^3 & - z(2\vartheta+1)(7\vartheta^2+7\vartheta+3)+z^2(\vartheta+1)(29\vartheta^2+58\vartheta+33) \nonumber \\
& - 30z^3(2\vartheta+3)(\vartheta^2+3\vartheta+2). \label{deq-15A}
\end{align}

The differential operators $\mathcal{D}_{14A}$ and $\mathcal{D}_{15A}$ are already known but $\mathcal{D}_{11A}$ seems to be new.

\subsubsection*{Degree $4$}

\begin{align}
\mathcal{D}_{13A}=\vartheta^3 & - z(2\vartheta+1)(4\vartheta^2+4\vartheta+1)-z^2(\vartheta+1)(46\vartheta^2+92\vartheta+61) \nonumber \\
& - 4z^3(2\vartheta+3)(8\vartheta^2+24\vartheta+21)-3z^4(\vartheta+2)(9\vartheta^2+36\vartheta+35), \label{deq-13A}
\end{align}

\begin{align}
\mathcal{D}_{17A}=\vartheta^3 & - z(2\vartheta+1)(3\vartheta^2+3\vartheta+1)-z^2(\vartheta+1)(27\vartheta^2+54\vartheta+35) \nonumber \\
& - 2z^3(2\vartheta+3)(7\vartheta^2+21\vartheta+17)-4z^4(\vartheta+2)(4\vartheta^2+16\vartheta+15), \label{deq-17A}
\end{align}

\begin{align}
\mathcal{D}_{19A}=\vartheta^3 & - 3z(2\vartheta+1)(2\vartheta^2+2\vartheta+1)+z^2(\vartheta+1)(22\vartheta^2+44\vartheta+31) \nonumber \\
& + 4z^3(2\vartheta+3)(\vartheta^2+3\vartheta+3)-3z^4(\vartheta+2)(9\vartheta^2+36\vartheta+35), \label{deq-19A}
\end{align}

\begin{align}
\mathcal{D}_{22A}=\vartheta^3 & - 2z(2\vartheta+1)(2\vartheta^2+2\vartheta+1)-4z^2(\vartheta+1)(\vartheta^2+2\vartheta+1) \nonumber \\
& + 2z^3(2\vartheta+3)(9\vartheta^2+27\vartheta+22)-8z^4(\vartheta+2)(4\vartheta^2+16\vartheta+15). \label{deq-22A}
\end{align}

\subsubsection*{Degree $5$}

\begin{align}
\mathcal{D}_{35A}=\vartheta^3 & - z(2\vartheta+1)(5\vartheta^2+5\vartheta+3)+z^2(\vartheta+1)(37\vartheta^2+74\vartheta+61) \nonumber \\
& - 2z^3(2\vartheta+3)(25\vartheta^2+75\vartheta+74)+4z^4(\vartheta+2)(34\vartheta^2+13\vartheta+145) \nonumber \\
& - 70z^5(2\vartheta+5)(\vartheta^2+5\vartheta+6), \label{deq-35A}
\end{align}

\begin{align}
\mathcal{D}_{39A}=\vartheta^3 & - 3z(2\vartheta+1)(3\vartheta^2+3\vartheta+2)+23z^2(\vartheta+1)(5\vartheta^2+10\vartheta+8) \nonumber \\
& - 3z^3(2\vartheta+3)(51\vartheta^2+153\vartheta+140)+253z^4(\vartheta+2)^3 \nonumber \\
& + 6z^5(2\vartheta+5)(9\vartheta^2+45\vartheta+56). \label{deq-39A}
\end{align}

\subsubsection*{Degree $6$}

\begin{align}
\mathcal{D}_{23A}=\vartheta^3 & - z(2\vartheta+1)(\vartheta^2+\vartheta)-z^2(\vartheta+1)(23\vartheta^2+46\vartheta+32) \nonumber \\
& - z^3(2\vartheta+3)(25\vartheta^2+75\vartheta+68)-z^4(\vartheta+2)(58\vartheta^2+232\vartheta+248) \nonumber \\
& - z^5(2\vartheta+5)(16\vartheta^2+80\vartheta+96)-z^6(\vartheta+3)(11\vartheta^2+66\vartheta+88), \label{deq-23A}
\end{align}

\begin{align}
\mathcal{D}_{29A}=\vartheta^3 & - z(2\vartheta+1)(5\vartheta^2+5\vartheta+3)+z^2(\vartheta+1)(23\vartheta^2+46\vartheta+37) \nonumber \\
& - z^3(2\vartheta+3)(5\vartheta^2+15\vartheta+13)-z^4(\vartheta+2)(15\vartheta^2+60\vartheta+68) \nonumber \\
& + 2z^5(2\vartheta+5)(5\vartheta^2+25\vartheta+33)-4z^6(\vartheta+3)(4\vartheta^2+24\vartheta+35), \label{deq-29A}
\end{align}

\begin{align}
\mathcal{D}_{31A}=\vartheta^3 & + 2z(2\vartheta+1)(\vartheta^2+\vartheta+1)-2z^2(\vartheta+1)(7\vartheta^2+14\vartheta+10) \nonumber \\
& - z^3(2\vartheta+3)(47\vartheta^2+141\vartheta+138)-3z^4(\vartheta+2)(53\vartheta^2+212\vartheta+240) \nonumber \\
& - 7z^5(2\vartheta+5)(7\vartheta^2+35\vartheta+46)-3z^6(\vartheta+3)(9\vartheta^2+54\vartheta+80). \label{deq-31A}
\end{align}

\section{Calabi-Yau differential equations of order $5$}

\subsection{Mirror map and Yukawa coupling}

We will denote as $w_0$, $w_1$, $w_2$, $w_3$, $w_4$, the fundamental solutions of $\mathcal{D}w=0$. The mirror map $z(q)$ and the Yukawa coupling of $\mathcal{D}w=0$ are defined as those corresponding to the fourth order pullback of $\mathcal{D}$. However, we can obtain the mirror map and the Yukawa coupling directly from the solutions of the fifth order differential equation. Indeed, from \cite[eqs. 3.19 \& 3.22]{AlGu2} we obtain
\begin{equation}
q=\exp \int \frac{w_0}{w_0 \vartheta w_1 - w_1 \vartheta w_0} \, \frac{1}{z\sqrt{P(z)}} \, dz,
\end{equation}
Expanding the integrand in powers of $z$, integrating term by term, exponentiating and expanding again in powers of $z$, we obtain $q$ as a series of powers of $z$. Then, we can invert it to have $z$ as a series of powers of $q$ (the mirror map). The Yucawa coupling is then given by \cite[eq. 3.22]{AlGu2}, that is
\begin{equation}
K(q)= \left( \frac{q}{z} \, \frac{dz}{dq} \right)^2 \, \frac{1}{w_0 \sqrt{P}}.
\end{equation}
The mirror map of a Calabi-Yau differential equation has integer coefficients. See \cite[Sect. 2.2]{Zu} for all the necessary conditions.

\subsection{Differential operators}

We show some Calabi-Yau differential operators $\mathcal{D}$ which belong to $\mathcal{YY}_5$. We have taken them from \cite{AlEnStZu}:

{\allowdisplaybreaks
\begin{align}
\#32: \qquad \mathcal{D}  &= \vartheta^5-3z(2\vartheta+1)(45\vartheta^4+90\vartheta^3+72\vartheta^2+27\vartheta+4)           \nonumber \\
                          &  \quad -3z^2(\vartheta+1)(9\vartheta^4+36\vartheta^3+53\vartheta^2+34\vartheta+8),            \nonumber \\[1.5ex]
\#60: \qquad \mathcal{D}  &= \vartheta^5-2z(2\vartheta+1)(31\vartheta^4+62\vartheta^3+54\vartheta^2+23\vartheta+4)           \nonumber \\
                          &  \quad +12z^2(\vartheta+1)(144\vartheta^4+576\vartheta^3+839\vartheta^2+526\vartheta+120),    \nonumber \\[1.5ex]
\#189: \qquad \mathcal{D} &= \vartheta^5-2z(2\vartheta+1)(65\vartheta^4+130\vartheta^3+105\vartheta^2+40\vartheta+6)         \nonumber \\
                          &  \quad +16z^2(\vartheta+1)(64\vartheta^4+256\vartheta^3+364\vartheta^2+216\vartheta+45),      \nonumber \\[1.5ex]
\#244: \qquad \mathcal{D} &= \vartheta^5+2z(2\vartheta+1)(26\vartheta^4+52\vartheta^3+44\vartheta^2+18\vartheta+3)           \nonumber \\
                          &  \quad -12z^2(\vartheta+1)(36\vartheta^4+144\vartheta^3+215\vartheta^2+142\vartheta+35),      \nonumber \\[1.5ex]
\#355: \qquad \mathcal{D} &= \vartheta^5-2z(2\vartheta+1)(43\vartheta^4+86\vartheta^3+77\vartheta^2+34\vartheta+6)           \nonumber \\
                          &  \quad +48z^2(\vartheta+1)(144\vartheta^4+576\vartheta^3+824\vartheta^2+496\vartheta+105),    \nonumber \\[1.5ex]
\#356: \qquad \mathcal{D} &= \vartheta^5-2z(2\vartheta+1)(59\vartheta^4+118\vartheta^3+105\vartheta^2+46\vartheta+8)         \nonumber \\
                          &  \quad +384z^2(\vartheta+1)(36\vartheta^4+144\vartheta^3+203\vartheta^2+118\vartheta+24),     \nonumber \\[1.5ex]
\#130: \qquad \mathcal{D} &= \vartheta^5-2z(2\vartheta+1)(14\vartheta^4+28\vartheta^3+28\vartheta^2+14\vartheta+3)           \nonumber \\
                          &  \quad +4z^2(\vartheta+1)(196\vartheta^4+784\vartheta^3+1235\vartheta^2+902\vartheta+255)     \nonumber \\
                          &  \quad -1152z^3(2\vartheta+3)(\vartheta^4+6\vartheta^3+13\vartheta^2+12\vartheta+4),          \nonumber \\[1.5ex]
\#188: \qquad \mathcal{D} &= \vartheta^5-2z(2\vartheta+1)(35\vartheta^4+70\vartheta^3+63\vartheta^2+28\vartheta+5)           \nonumber \\
                          &  \quad +4z^2(\vartheta+1)(1036\vartheta^4+4144\vartheta^3+6061\vartheta^2+3834\vartheta+855)  \nonumber \\
                          &  \quad -1800z^3(2\vartheta+3)(4\vartheta^4+24\vartheta^3+49\vartheta^2+39\vartheta+10),       \nonumber
\end{align}
where the symbol $\#$ stands as a reference of the equation in the Big Tables of \cite{AlEnStZu}.
}
Also very useful is the online database in \cite{Van-St}.

\subsection{New examples}

From some Calabi-Yau fourth order differential operators in \cite[p. 120]{Bog}, we get the following new fifth order Calabi-Yau operators in the class $\mathcal{YY}_5$, by making the Hadamard products indicated below:

\subsubsection*{$\binom{2n}{n}*(4')$:}

\begin{align}
\mathcal{D}   &= \vartheta^5-2^5z(2\vartheta+1)(24\vartheta^4+48\vartheta^3+26\vartheta^2+2\vartheta-1)               \nonumber  \\
              &  \quad -2^{12} z^2(\vartheta+1)(240\vartheta^4+960\vartheta^3+1528\vartheta^2+1136\vartheta+291),  \nonumber \\
              &  \quad -2^{24} z^3(2\vartheta+3)(4\vartheta^4+24\vartheta^3+49\vartheta^2+39\vartheta+10),         \nonumber
\end{align}

\subsubsection*{$\binom{2n}{n}*(5')$:}

\begin{align}
\mathcal{D}   &= \vartheta^5-2^2\,3^2z(2\vartheta+1)(36\vartheta^4+72\vartheta^3+63\vartheta^2+27\vartheta+5)          \nonumber  \\
              &  \quad +2^4\,3^6z^2(\vartheta+1)(144\vartheta^4+576\vartheta^3+904\vartheta^2+656\vartheta+165),    \nonumber \\
              &  \quad -2^{11}\,3^9 z^3(2\vartheta+3)(4\vartheta^4+24\vartheta^3+49\vartheta^2+39\vartheta+10),     \nonumber
\end{align}

\subsubsection*{$\binom{2n}{n}*(6')$:}

\begin{align}
\mathcal{D}   &= \vartheta^5-2^2z(2\vartheta+1)(572\vartheta^4+1144\vartheta^3+795\vartheta^2+223\vartheta+23)           \nonumber \\
              &  \quad -2^4 z^2(\vartheta+1)(9200\vartheta^4+36800\vartheta^3+58184\vartheta^2+42768\vartheta+10863), \nonumber \\
              &  \quad -2^{14}\,3^2 z^3(2\vartheta+3)(4\vartheta^4+24\vartheta^3+49\vartheta^2+39\vartheta+10).       \nonumber
\end{align}

\noindent From some Calabi-Yau fourth order differential operators in \cite{AlBo}, we get other new fifth order CY operators by making the Hadamard products indicated below.

\subsubsection*{$\binom{2n}{n}*(3.5)$:}

\begin{align}
\mathcal{D}   &= \vartheta^5-2^2z(2\vartheta+1)(102\vartheta^4+204\vartheta^3+155\vartheta^2+53\vartheta+7)          \nonumber  \\
              &  \quad +2^4 z^2(\vartheta+1)(1584\vartheta^4+6336\vartheta^3+8768\vartheta^2+4864\vartheta+933),  \nonumber \\
              &  \quad -2^7\,7^2 z^3(2\vartheta+3)(16\vartheta^4+96\vartheta^3+184\vartheta^2+120\vartheta+25),   \nonumber
\end{align}

\subsubsection*{$\binom{2n}{n}*(3.8)$:}

\begin{align}
\mathcal{D}   &= \vartheta^5+2z(2\vartheta+1)(15\vartheta^4+30\vartheta^3+35\vartheta^2+20\vartheta+4)                 \nonumber \\
              &  \quad -2^7 z^2(\vartheta+1)(264\vartheta^4+1056\vartheta^3+1466\vartheta^2+820\vartheta+159),      \nonumber \\
              &  \quad -2^{11}\,7^2 z^3(2\vartheta+3)(16\vartheta^4+96\vartheta^3+184\vartheta^2+120\vartheta+25),  \nonumber
\end{align}

\subsubsection*{$\binom{2n}{n}*(3.9)$:}

\begin{align}
\mathcal{D}   &= \vartheta^5-2z(2\vartheta+1)(113\vartheta^4+226\vartheta^3+173\vartheta^2+60\vartheta+8)            \nonumber  \\
              &  \quad -2^5 z^2(\vartheta+1)(476\vartheta^4+1904\vartheta^3+2629\vartheta^2+1450\vartheta+276),   \nonumber \\
              &  \quad -2^5\,11^2 z^3(2\vartheta+3)(16\vartheta^4+96\vartheta^3+184\vartheta^2+120\vartheta+25),  \nonumber
\end{align}

\subsubsection*{$\binom{2n}{n}*(3.15)$:}

\begin{align}
\mathcal{D}   &= \vartheta^5-2^4z(2\vartheta+1)(21\vartheta^4+42\vartheta^3+30\vartheta^2+9\vartheta+1)              \nonumber  \\
              &  \quad -2^8 z^2(\vartheta+1)(384\vartheta^4+1536\vartheta^3+2132\vartheta^2+1192\vartheta+231),   \nonumber \\
              &  \quad -2^{12}\,5^2 z^3(2\vartheta+3)(16\vartheta^4+96\vartheta^3+184\vartheta^2+120\vartheta+25),  \nonumber
\end{align}

\subsubsection*{$\binom{2n}{n}*\#388$:}

\begin{align}
\mathcal{D}   &= \vartheta^5-2^2z(2\vartheta+1)(582\vartheta^4+1164\vartheta^3+815\vartheta^2+233\vartheta+25)           \nonumber  \\
              &  \quad +2^4 z^2(\vartheta+1)(9264\vartheta^4+37056\vartheta^3+51632\vartheta^2+29152\vartheta+5721),  \nonumber \\
              &  \quad -2^7\,17^2 z^3(2\vartheta+3)(16\vartheta^4+96\vartheta^3+184\vartheta^2+120\vartheta+25).      \nonumber
\end{align}

\begin{center}
\begin{table}
\begin{tabular}{|c||c|c|c|}
\hline &&& \\
$\quad {\rm reference} \quad$ & $\quad \ell_1 \quad $ & $\quad \ell_2 \quad$ & $\quad \ell_3 \quad$  \\[1.5ex]
\hline \hline &&& \\
$\#32$  & $39$  & $117$ & $0$     \\[1.5ex]
$\#37=\binom{4n}{2n}*\#16$ & $24$ & $144$ & $-448$  \\[1.5ex]
$\#39=\binom{2n}{n}*\#16$  & $48$  & $144$ & $-224$  \\[1.5ex]
$\#44=\binom{2n}{n}*\#29$  & $24$  & $96$  & $-164$  \\[1.5ex]
$\#50=\binom{3n}{n}*\#16$  & $24$  & $96$  & $-208$  \\[1.5ex]
$\binom{3n}{n}*\#42$ & $24$ & $108$ & $-231$  \\[1.5ex]
$\binom{4n}{2n}*\#42$ & $16$ & $104$ & $-314$  \\[1.5ex]
$\#60$  & $184$ & $368$ & $-400$  \\[1.5ex]
$\#130$ & $360$ & $360$ & $-240$  \\[1.5ex]
$\#188=\binom{2n}{n}*\#34$ & $120$ & $240$ & $-448$  \\[1.5ex]
$\#189=\binom{2n}{n}*\#28$ & $42$  & $126$ & $-180$  \\[1.5ex]
$\#244$ & $84$  & $168$ & $-168$  \\[1.5ex]
$\#355$ & $132$ & $264$ & $-360$  \\[1.5ex]
$\#356=\binom{2n}{n}*\#205$ & $160$ & $320$ & $-448$  \\[1.5ex]
$\binom{2n}{n}*4'$ & $8$ & $40$ & $-24$  \\[1.5ex]
$\binom{2n}{n}*5'$ & $9$ & $39$ & $-6$  \\[1.5ex]
$\binom{2n}{n}*6'$ & $6$ & $42$ & $-76$  \\[1.5ex]
$\binom{2n}{n}*(3.5)$ & $18$ & $78$ & $-124$  \\[1.5ex]
$\binom{2n}{n}*(3.8)$ & $36$ & $108$ & $-144$  \\[1.5ex]
$\binom{2n}{n}*(3.9)$ & $25$ & $95$ & $-150$  \\[1.5ex]
$\binom{2n}{n}*(3.15)$ & $16$ & $68$ & $-90$  \\[1.5ex]
$\binom{2n}{n}*\#388$ & $6$ & $42$ & $-76$  \\[1.5ex] \hline
\end{tabular}
\vskip 0.5cm
\caption{$\ell$-numbers} \label{ele-num}
\end{table}
\end{center}

\subsection{The $\ell$ numbers}

Let $z_c$ the smallest positive root of $P(z)$ and suppose that $q_c$ is a root of $dz/dq$ such that $K(q_c)=0$ and $z_c=z(q_c)$ (good cases). Then $\tau_c=\tau(q_c)$ is given by
\begin{equation}
\lim_{z \to z_c} \sqrt{P(z)} \, \vartheta^2 w_0 = \frac{1}{\tau_c \, \pi^2},
\end{equation}
an heuristic analogue of \cite[eq. 27]{GuZu} suggested by \cite[Th. 3.2]{AlGu2}. Once we have found the value of $\tau_c$ we can calculate $\alpha_c=\alpha(q_c)$ and $h$ from \cite[eqs. 29 \& 30]{AlGu2}. Notice that in the tables of \cite{AlGu} we used a different notation, namely: $f=\tau_c^2$ and $e=2\alpha_c$. We define the numbers:
\begin{equation}\label{equiv}
\ell_1=\frac{16}{\tau_c^2}, \qquad \ell_2=\frac{16}{\tau_c^2} (6 \alpha_c), \qquad \ell_3=\frac{16}{\tau_c^2} (-h).
\end{equation}
The motivation for the definition of the $\ell$-numbers is to have some characteristics numbers analogous to the geometric invariants $(H^3, \, c_2H, \, c_3)$ of fourth order Calabi-Yau differential equations. First we hint that the $\ell$ numbers are always integers for all the cases we have calculated them. As our formulas diverge when the differential equation has a conifold period, we only deal with fifth order CY differential equations which have no a conifold period. We show some cases in Table \ref{ele-num}.
Has $(\ell_1, \ell_2, \ell_3)$ a geometric interpretation?. It is very curious that, when we calculate the values of $\ell_1$, $\ell_2-\ell_1$ and $2\ell_1+\ell_3$, where $(\ell_1, \ell_2, \ell_3)$ are the $\ell$-numbers of the fifth order operator
\begin{equation}\label{hyp-5}
\vartheta^5 - 32 z (2\vartheta+1)^5,
\end{equation}
we obtain the geometric invariants $H^3$, $c_2H$ and $c_3$ (see \cite{ChYaYu} and \cite{EnSt}) of the fourth order operator \cite[Table 1]{ChYaYu}
\begin{equation}\label{hyp-4}
\vartheta^4 - 16 z (2\vartheta+1)^4,
\end{equation}
and similar coincidences occur for the fourteen hypergeometric cases \cite[Table 1]{ChYaYu}. Thinking about it has lead us to the following conjecture:
\begin{conjecture}\label{conj}
Let $y_0$ be the holomorphic solution of a fourth order CY differential equation of geometric invariants $(H_3, \, c_2 H, \, c_3)$. Let
\[
y_0=\sum_{n=0}^{\infty} A_n z^n, \quad w_0=\sum_{n=0}^{\infty} C_n A_n z^n,
\]
If we choose $C_n$ is such a way that $w_0$ is the holomorphic solution of a fifth order CY differential equation of $\ell$-numbers $(\ell_1, \ell_2, \ell_3)$, then we have the relations shown in Table \ref{relations}.

\begin{center}
\begin{table}
\begin{tabular}{|c||c|c|c|}
\hline &&& \\
$\quad C_n \quad$ & $\quad H^3 \quad $ & $\quad c_2H \quad$ & $\quad c_3 \quad$  \\[1.5ex]
\hline \hline &&& \\
$\binom{2n}{n}$ & $\ell_1$  & $\ell_2-\ell_1$ & $2\ell_1+\ell_3$ \\[1.25ex]
$\binom{4n}{2n}$ & $2\ell_1$ & $2\ell_2-8\ell_1$ & $32\ell_1+2\ell_3$ \\[1.25ex]
$\binom{3n}{n}$ & $\frac43 \ell_1$ & $\frac43 \ell_2-\frac83 \ell_1$ & $8\ell_1+\frac43 \ell_3$ \\[1.25ex]
$\binom{6n}{3n} \binom{3n}{n} \binom{2n}{n}^{-1}$ & $4 \ell_1$ & $4 \ell_2-40 \ell_1$ & $232 \ell_1+4 \ell_3$ \\[1.5ex] \hline
\end{tabular}
\vskip 0.5cm
\caption{relations of Conjecture \ref{conj}} \label{relations}
\end{table}
\end{center}
\end{conjecture}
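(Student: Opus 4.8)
The plan is to reduce Conjecture~\ref{conj} to a statement about how the Hadamard multiplier $C_n$ transforms the local analytic data of the fourth order operator at its conifold point into the local data of the fifth order operator at its critical point $z_c$. Indeed, by (\ref{equiv}) the triple $(\ell_1,\ell_2,\ell_3)$ is by construction a fixed linear function of $\tau_c$, $\alpha_c$ and $h$, and each of these is read off from the leading behaviour of $w_0$, $\vartheta^2 w_0$ and the Yukawa coupling $K(q)$ as $z\to z_c$ (through the limit formula for $\tau_c$ and \cite[eqs. 29 \& 30]{AlGu2}). On the other side, the triple $(H^3,c_2H,c_3)$ of a fourth order CY operator is likewise encoded in the monodromy around its conifold point and the expansion of the holomorphic period there in an integral symplectic basis (see \cite{ChYaYu,EnSt}). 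So it suffices to show that passing from $y_0=\sum A_n z^n$ to $w_0=\sum C_n A_n z^n$ carries the conifold-local data of the fourth order equation to the $z_c$-local data of the fifth order equation exactly according to Table~\ref{relations}.

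The first step would be to realise each of the four multipliers as an Euler-type integral. For $C_n=\binom{2n}{n}$ one has $\sum C_n z^n=(1-4z)^{-1/2}$ together with $\binom{2n}{n}=\tfrac{4^n}{\pi}\int_0^1 t^n\,[t(1-t)]^{-1/2}\,dt$, and for $\binom{4n}{2n}$, $\binom{3n}{n}$ and $\binom{6n}{3n}\binom{3n}{n}\binom{2n}{n}^{-1}$ one uses the analogous Beta-integral (equivalently, contiguity) representations coming from classical ${}_2F_1$ and ${}_3F_2$ evaluations. This produces
\[
w_0(z)=\int_0^1 \mathcal{K}_C(t)\,y_0\big(\rho(C)\,z\,t\big)\,dt ,
\]
with an explicit kernel $\mathcal{K}_C$ and scaling constant $\rho(C)\in\{4,\,16,\,27/4,\,108\}$. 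From this representation I would extract: (i) $z_c$ for $\mathcal{D}_5$ is the value of $z$ at which $\rho(C)\,z\,t$ first reaches the conifold point of $y_0$ on the support of $\mathcal{K}_C$; (ii) the leading singular part of $w_0$, and of $\vartheta w_0$ and $\vartheta^2 w_0$, at this $z_c$ is a Beta-convolution of the conifold singularity of $y_0$, which one can evaluate asymptotically; (iii) the function $P(z)$ attached to $\mathcal{D}_5$, and hence $K(q)$ near $z_c$, transforms by a correspondingly explicit rule. Feeding (i)--(iii) into the definitions gives $\tau_c$, $\alpha_c$, $h$ of $\mathcal{D}_5$ as explicit functions of the fourth order conifold data, after which the identities in Table~\ref{relations} become a finite check. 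I would verify everything first on the fourteen hypergeometric cases \cite[Table 1]{ChYaYu} --- where $y_0$, $w_0$, $z_c$ and $\tau_c$ are all in closed form, and where, for instance, (\ref{hyp-5}) is the $\binom{2n}{n}$-Hadamard image of (\ref{hyp-4}) --- and only then attempt the general argument.

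The main obstacle is foundational rather than computational: as the paper itself stresses, the limit defining $\tau_c$ is only a heuristic analogue of \cite[eq. 27]{GuZu}, so before any matching can be carried out one must give rigorous, operator-intrinsic definitions of $\tau_c$, $\alpha_c$, $h$ and of $H^3$, $c_2H$, $c_3$ in purely analytic terms, and prove that they are well defined in the ``good cases''. This is essentially the same gap that blocks an explanation of the $1/\pi^2$ family, and I would not expect to close it here. A secondary, more technical difficulty is controlling the asymptotics of the Hadamard convolution when the conifold singularity of $y_0$ is of $(z_c-w)^{3/2}\log$ type rather than a clean power, and checking that each of the four multipliers preserves the ``good case'' hypotheses ($K(q_c)=0$, $z_c=z(q_c)$, and absence of a conifold period for $\mathcal{D}_5$). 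Granting these points, the remaining algebra --- in particular the appearance of the factors $2$, $\tfrac43$, $4$ in the first column of Table~\ref{relations} and the shifts such as $\ell_2\mapsto 2\ell_2-8\ell_1$ and $\ell_3\mapsto 232\ell_1+4\ell_3$ in the others --- should follow mechanically from the scaling constant $\rho(C)$ and the shape of the kernel $\mathcal{K}_C$.
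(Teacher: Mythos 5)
This statement is a conjecture: the paper does not prove it, and offers as support only (i) numerical verification, matching the $\ell$-numbers of Table \ref{ele-num} against the known invariants collected in Table \ref{geo-inv}, and (ii) a heuristic derivation from the expansions (\ref{Ax}) and (\ref{Bx}). So there is no ``paper's own proof'' to compare against; one can only compare supporting arguments, and yours is genuinely different from the paper's. The paper's mechanism lives at $x=0$, not at the conifold: writing $A_x$ and $B_x$ for the Gamma-function interpolations of the coefficient sequences, the triples $(H^3,c_2H,c_3)$ and $(\ell_1,\ell_2,\ell_3)$ appear as Taylor coefficients of $A_x$ and $B_x$ at $x=0$ via (\ref{Ax}) and (\ref{Bx}); since $B_x=C_xA_x$ with $C_x$ an explicit ratio of Gamma functions (for instance $\binom{2x}{x}=1+\zeta(2)x^2-2\zeta(3)x^3+\cdots$), the linear relations of Table \ref{relations} drop out by multiplying power series and comparing the $\pi^2x^2$, $\zeta(3)x^3$ and $\pi^4x^4$ terms. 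This is rigorous only for the fourteen hypergeometric pairs, where (\ref{Ax}) and (\ref{Bx}) are known; beyond that the paper explicitly says it ``believes'' the expansions hold, which is exactly why the statement is left as a conjecture.

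Your route --- Beta-integral representations of the Hadamard multiplier and asymptotics of the resulting convolution at $z_c$ --- does not engage with this mechanism and is considerably harder. Beyond the foundational gap you correctly flag (that $\tau_c$, $\alpha_c$ and $h$ rest on a heuristic limit formula, so there is at present nothing fully rigorous to prove), there is a second obstruction specific to your plan: $(\ell_1,\ell_2,\ell_3)$ is not purely local data at $z_c$. The quantities $\alpha_c$ and $h$ from \cite{AlGu2} involve the potential obtained by integrating $(1-K(q))/q$ three times from the MUM point $q=0$ and only then evaluating at $q_c$; they therefore encode global transport from $z=0$ to $z_c$, and a purely local matching of singular expansions of the Hadamard convolution at $z_c$ cannot by itself produce Table \ref{relations}. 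The $x=0$ expansion of $C_x$ is precisely the device that sidesteps this. In short: you have correctly diagnosed that the statement is not provable as it stands, but the strategy you outline is not the one the paper's evidence points to, and even granting the missing definitions it would need an additional idea to reach $\alpha_c$ and $h$ from conifold asymptotics alone.
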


\noindent Assuming the conjecture and using the values in Table \ref{ele-num}, we get the geometric invariants in Table \ref{geo-inv}. We see that they agree with those in tables of \cite{ChYaYu} and \cite{EnSt} or in Van Straten's on-line Database \cite{Van-St}. This supports our conjecture.

\begin{center}
\begin{table}
\begin{tabular}{|c||c|c|c|}
\hline &&& \\
$\quad {\rm reference} \quad$ & $\quad H^3 \quad $ & $\quad c_2 H \quad$ & $\quad c_3 \quad$  \\[1.5ex]
\hline \hline &&& \\
$\#16$ & $48$ & $96$ & $-128$  \\[1.5ex]
$\#29$  & $24$  & $72$  & $-116$  \\[1.5ex]
$\#34$ & $120$ & $120$ & $-80$  \\[1.5ex]
$\#28$ & $42$  & $84$ & $-96$  \\[1.5ex]
$\#42$ & $32$  & $80$ & $-116$  \\[1.5ex]
$\#205$ & $160$ & $160$ & $-128$  \\[1.5ex]
$4'\sim \#220$ & $8$ & $32$ & $-8$  \\[1.5ex]
$5'\sim \#73$ & $9$ & $30$ & $12$  \\[1.5ex]
$6'\sim\#388$ & $6$ & $36$ & $-64$  \\[1.5ex]
$(3.5)\sim  \#214$ & $18$ & $60$ & $-88$  \\[1.5ex]
$(3.8)\sim \#100 $ & $36$ & $72$ & $-72$  \\[1.5ex]
$(3.9)\sim \#101 $ & $25$ & $70$ & $-100$  \\[1.5ex]
$(3.15)\sim \#328$ & $16$ & $52$ & $-58$  \\[1.5ex] \hline
\end{tabular}
\vskip 0.5cm
\caption{Geometric invariants} \label{geo-inv}
\end{table}
\end{center}

\subsection{Explicit formulae for the hypergeometric cases}

If
\begin{equation}\label{y0w0}
y_0=\sum_{n=0}^{\infty} A_n z^n, \quad w_0=\sum_{n=0}^{\infty} B_n z^n,
\end{equation}
are respectively the holomorphic solutions of a fourth and a fifth order CY differential equation of hypergeometric type, the following expansions hold
\begin{align}
A_x = & 1+\frac16 \frac{c_2H}{H^3} \pi^2 x^2 + \frac{c_3}{H^3} \zeta(3) x^3  \nonumber \\ &+ \left[ \frac{-1}{90} + \frac{1}{18} \left( \frac{c_2 H}{H^3} \right) + \frac{1}{24} \left( \frac{c_2H}{H^3}\right)^2 -\frac{8}{H^3} \right] \pi^4 x^4 + \mathcal{O}(x^5), \label{Ax}
\end{align}
and
\begin{equation}\label{Bx}
B_x=1+\frac16 \frac{\ell_2}{\ell_1}\pi^2x^2+\frac{\ell_3}{\ell_1}\zeta(3)x^3+\left[ \frac{1}{24} \left( \frac{\ell_2}{\ell_1}\right)^2 -\frac{8}{\ell_1} \right] \pi^4 x^4 + \mathcal{O}(x^5).
\end{equation}
Although the analytic continuation of $A_n$ and $B_n$ to $A_x$ and $B_x$ are not known for the non-hypergeometric cases, we believe that these expansions hold in all cases because they explain the relations in Conj. \ref{conj}. We observe that it seems that $(\ell_1, \, \ell_2, \, \ell_3)$ is the analogue of $(H^3, \, c_2 H, \, c_3)$ for the case of fifth order Calabi-Yau differential equations. If we write the hypergeometric cases using the Pochhammer symbol $(s)_n=s(s+1)\cdots(s+n-1)$ instead of binomial numbers:
\[
A_n=\alpha^n \frac{(s_1)_n(1-s_1)_n(s_2)_n(1-s_2)_n}{(1)_n^3}, \qquad B_n=4^n \frac{\left(\frac12\right)_n}{(1)_n}A_n.
\]
where $(s_1,\,s_2)$ are the $14$ possible pairs and $\alpha$ the smallest positive integer such that the numbers $A_n$ are all integers. Then, from (\ref{Ax}) and (\ref{Bx}) we get the explicit formulas
\begin{align}
H^3 &=16 \sin^2 \pi s_1 \sin^2 \pi s_2, \nonumber \\
c_2 H &= H^3 (4+3\cot^2 \pi s_1 + 3 \cot^2 \pi s_2), \nonumber \\
c_3 &=H^3 \left[  \frac43 - \frac{\zeta(3,s_1) + \zeta(3,s_2) + \zeta(3,1-s_1) + \zeta(3,1-s_2)}{3 \, \zeta(3)} \right], \nonumber
\end{align}
and
\begin{align}
\ell_1 &=16 \sin^2 \pi s_1 \sin^2 \pi s_2, \nonumber \\
\ell_2 &=\ell_1 (5+3\cot^2 \pi s_1 + 3 \cot^2 \pi s_2), \nonumber \\
\ell_3 &=-\ell_1 \left[ \frac23+\frac{\zeta(3,s_1)+\zeta(3,s_2)+\zeta(3,1-s_1)+\zeta(3,1-s_2)}{3 \, \zeta(3)} \right], \nonumber
\end{align}
where $\zeta(s,a)$ is the Hurwitz $\zeta$-function.

\section{Maple program}

The following program obtains the polynomial $P(z)$, the mirror and the Yukawa coupling of fifth order Calabi-Yau differential equations. In addition it also finds the $\ell$ numbers of good cases. We conjecture that equivalent cases have the same $\ell$-numbers. So, if a case is bad then we need to find an equivalent good case.

\begin{verbatim}

restart;
with(combinat,stirling2):

V:=proc(n) local j; global L; if n=0 then 1;
else sum(stirling2(n,j)*z^j*Dz^j,j=1..n); fi; end:

Tz:=proc() global pp,V,L,r,w0,w1,w2,w3,w4,qq,m,T,yc,pol,dg,mfinal,dw,dyc;
mfinal:=degree(opd,z); pp:=m->coeff(opd,z,m); print(subs(t=theta,opd));
L:=collect(V(5)+add(add(z^m*coeff(pp(m),t,k)*V(k),m=1..mfinal),k=0..5),Dz):
Order:=61: with(DEtools):
r:=formal_sol(L,[Dz,z],z=0): pol:=z->coeff(opd,t,5); print(P(z)=pol(z));
w0:=r[5]: w1:=r[4]: w2:=r[3]: w3:=r[2]: w4:=r[5]:
dw:=1/(z^2*sqrt(pol(z)))*w0/(w0*diff(w1,z)-w1*diff(w0,z));
qq:=convert(series(exp(int(series(dw,z,51),z)),z,51),polynom):
m:=convert(series(solve(series(qq,z)=q,z),q,51),polynom):
print(J(q)=series(1/m,q,8)); dyc:=1/sqrt(pol(z))*1/w0*(q/z*diff(m,q))^2:
yc:=convert(series(subs(z=m,dyc),q,51),polynom); print(K(q)=series(yc,q,8));
T:=expand(int(expand(int(expand(int((1-yc)/q,q))/q,q))/q,q));
dg:=convert(series(z*diff(z*diff(w0,z),z)*sqrt(pol(z)),z,51),polynom); end:

ele:=proc() global qq0,tt0,vv,alphac,f,h,tauc,zz0,iq0,iv,l1,l2,l3;
Digits:=30; iv:=x->1/x; iq0:=sort(map(iv,-[fsolve(diff(m,q)=0,q)]))[1];
qq0:=-1/iq0; tt0:=evalf(ln(qq0)); if subs(q=qq0,yc)>10^(-6) then return
print("Bad case"); else fi;
tauc:=1/(Pi^2*subs(q=qq0,convert(series(subs(z=m,dg),q,51),polynom)));
f:=evalf(tauc^2);
alphac:=evalf(1/Pi^2*(tt0^2/2-subs(q=qq0,q*diff(T,q)))-tauc);
h:=evalf((-Pi^2*ln(qq0)*alphac+tt0^3/6-subs(q=qq0,T))/Zeta(3));
l1:=convert(16/f,fraction,6); l2:=convert(16*(6*alphac)/f,fraction,6);
l3:=convert(16*(-h)/f,fraction,6);  print(l-numbers=[l1,l2,l3]); end:

calabi:=proc(); Tz(); ele(); end:

\end{verbatim}

\noindent Copy the above code, from a version on-line, and paste it in a Maple session. Suppose that you want to solve the differential equation $\#355$. Then type

\begin{verbatim}

type355:=proc() global opd; print("type355"):
opd:=t^5-2*z*(2t+1)*(43*t^4+86*t^3+77*t^2+34*t+6)
+48*z^2*(t+1)*(144*t^4+576*t^3+824*t^2+496*t+105):
calabi(): end:

type355();

\end{verbatim}

\noindent and execute the program.

\section{Calabi-Yau differential operators of order $7$}

Let
\[
A(n)=\dbinom{2n}{n}^2, \quad B(n)=\dbinom{2n}{n}\dbinom{3n}{n}, \quad
C(n)=\dbinom{2n}{n}\dbinom{4n}{2n}, \quad D(n)=\dbinom{3n}{n}\dbinom{6n}{3n}.
\]
Then using ``Zeilberger" in Maple on (notice that the sum is identically zero by symmetry)
\[
a_n=A(n)\sum_{k=0}^n(n-2k)A(k)^2A(n-k)^2, \quad y_0=\sum_{n^0}^{\infty} a_n z^n,
\]
and similarly for $B,C,D$. For these four cases the function $y_0$ satisfies a CY differential equation of seven order which belongs to the class $\mathcal{YY}_7$. The first remarkable thing about these four differential equations is that the Yukawa coupling can be written 
\[
K(q)=(q\frac{d}{dq})^2(\frac{y_2}{y_0})=1+\sum_{d=1}^{\infty}\frac{d^4n_dq^d}{1-q^d}.
\]
The other remarkable things are
\[
(q\frac{d}{dq})^2(\frac{y_3}{y_0})=K(q)\log (q), \quad  
(q\frac{d}{dq})^2(\frac{y_4}{y_0})=K(q)\frac{\log^{2}(q)}{2}, \quad 
(q\frac{d}{dq})^2(\frac{y_5}{y_0})=\Phi (q)K(q) 
\]
where 
\[ 
\Phi (q)=\dfrac{1}{2}(\dfrac{y_{1}}{y_{0}}\dfrac{y_{2}}{y_{0}}-\dfrac{y_{3}}{y_{0}}) 
\] 
is the Gromov-Witten potential.
The first identity is equivalent to (see [1], p.484)
\[
y_0 y'_3-y'_0y_3=y_1 y'_2-y'_1y_2,
\]
which is equivalent one of the Calabi-Yau conditions for a $4^{th}$ order equation, but is normally not satisfied by $7^{th}$ order equations. Let
\[
T_{jk}=x(y_{j}y'_{k}-y'_{j}y_{k}). 
\]
Then we have 
\[
T_{03}=T_{12}, \quad 2T_{04}=T_{13}, \quad 2T_{05}=T_{23}, \quad T_{16}=T_{34}, \quad, T_{26}=T_{35}, \quad T_{36}=2T_{45}. 
\]
and
\[ 
T_{06}=T_{24}-T_{15}.
\]

\begin{proposition}
\begin{align}
&(a) \quad 2T_{04}=T_{13} \Rightarrow (q\dfrac{d}{dq})^{2}(\dfrac{y_{4}}{y_0})=K(q)\dfrac{\log^{2}(q)}{2} \nonumber 
\\
&(b) \quad 2T_{05}=T_{23} \Rightarrow (q \dfrac{d}{dq})^{2}(\dfrac{y_{5}}{y_{0}})=\Phi (q)K(q). \nonumber
\end{align}
\end{proposition}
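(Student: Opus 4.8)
The plan is to reduce both implications to two elementary inputs. The first is the change-of-variables identity $q\frac{d}{dq}(y_j/y_0)=T_{0j}/T_{01}$, valid for every $j$: writing $u_j=y_j/y_0$, $\delta=q\,d/dq$ and recalling $q=\exp(u_1)$, the chain rule gives $\delta F=F'/u_1'$ for any $F$, hence
\[
\delta u_j=\frac{u_j'}{u_1'}=\frac{y_0y_j'-y_0'y_j}{y_0y_1'-y_0'y_1}=\frac{T_{0j}}{T_{01}},
\]
and in particular $\delta u_1=1$. The second is the Pl\"ucker identity $f\,T_{gh}=g\,T_{fh}-h\,T_{fg}$, valid for \emph{any} three functions $f,g,h$: since $T_{ab}=a\,\vartheta b-(\vartheta a)b=z(ab'-a'b)$ with $\vartheta=z\,d/dz$, this is the vanishing of the determinant of the $3\times3$ matrix with rows $(f,g,h)$, $(f,g,h)$, $(\vartheta f,\vartheta g,\vartheta h)$, expanded along the first row. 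I will also use, as already available, that $K=\delta^2u_2$, that $\delta^2u_3=K\log q=Ku_1$ (the first identity in the list, equivalent to $T_{03}=T_{12}$), and the definition $\Phi=\tfrac12(u_1u_2-u_3)$.

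For part (a), I would combine the hypothesis $2T_{04}=T_{13}$ with the Pl\"ucker identity applied to $(y_0,y_1,y_3)$, i.e. $y_0T_{13}=y_1T_{03}-y_3T_{01}$, to get
\[
\delta u_4=\frac{T_{04}}{T_{01}}=\frac{T_{13}}{2T_{01}}=\frac{y_1T_{03}-y_3T_{01}}{2y_0T_{01}}=\tfrac12\bigl(u_1\,\delta u_3-u_3\bigr).
\]
Applying $\delta$ once more and using $\delta u_1=1$, the right-hand side collapses to $\tfrac12u_1\,\delta^2u_3$, which by $\delta^2u_3=Ku_1$ equals $K\,(\log q)^2/2$. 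That is exactly (a).

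For part (b), the same scheme works after one further reduction. Pl\"ucker on $(y_0,y_2,y_3)$ gives $y_0T_{23}=y_2T_{03}-y_3T_{02}$; using $T_{03}=T_{12}$ and then $y_0T_{12}=y_1T_{02}-y_2T_{01}$ (Pl\"ucker on $(y_0,y_1,y_2)$) one obtains $y_0^2T_{23}=(y_1y_2-y_0y_3)T_{02}-y_2^2T_{01}$. Then, by $2T_{05}=T_{23}$ and $\Phi=(y_1y_2-y_0y_3)/(2y_0^2)$,
\[
\delta u_5=\frac{T_{05}}{T_{01}}=\frac{T_{23}}{2T_{01}}=\Phi\,\delta u_2-\tfrac12u_2^2.
\]
Differentiating, $\delta^2u_5=(\delta\Phi)\,\delta u_2+\Phi\,\delta^2u_2-u_2\,\delta u_2$. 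The small lemma I would need here is $\delta\Phi=u_2$, which again follows from $T_{03}=T_{12}$: Pl\"ucker gives $\delta u_3=T_{12}/T_{01}=u_1\delta u_2-u_2$, hence $\delta\Phi=\tfrac12(u_2+u_1\delta u_2-\delta u_3)=u_2$. Substituting, the outer terms cancel and $\delta^2u_5=\Phi\,\delta^2u_2=\Phi\,K$, which is (b).

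The computations are short, so the main thing to watch is the bookkeeping of normalizations and of which structural facts get used where. One must take $q=\exp(y_1/y_0)$ so that $\delta u_1=1$ holds \emph{exactly}; and for (b) one must remember that besides the stated hypothesis $2T_{05}=T_{23}$, the already-established identity $T_{03}=T_{12}$ enters twice — once to simplify $T_{23}$ and once to obtain $\delta\Phi=u_2$. This is where the structure peculiar to these order-seven operators (the complete list of $T$-relations, together with $\delta^2u_3=K\log q$) is actually consumed; the rest is the purely formal Pl\"ucker reduction of $T_{1k}$ and $T_{2k}$ to the $T_{0k}$'s.
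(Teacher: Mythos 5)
Your proof is correct and follows essentially the same route as the paper: pass to the variable $t=\log q=y_1/y_0$, use the hypothesis to express $\delta(y_4/y_0)$ (resp.\ $\delta(y_5/y_0)$) as a first-order combination of $u_1,u_2,u_3$ and their $\delta$-derivatives, differentiate once more, and invoke the already-established identity $\delta^2(y_3/y_0)=K\log q$. Your Pl\"ucker identity is just the quotient-rule expansion of $\frac{d}{dt}(y_3/y_1)$ and $\frac{d}{dt}(y_3/y_2)$ that the paper performs directly, and your extra lemma $\delta\Phi=u_2$ compensates for a cancellation that happens automatically in the paper's symmetric form $\tfrac12(u_2\,\delta^2u_3-u_3\,\delta^2u_2)$; the substance is identical.
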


\begin{proof}[Proof of (a)] 
We have
\[
y_0^2\frac{d}{dz}(\frac{y_4}{y_0})=\frac{y_1^2}{2}\frac{d}{dz}(\frac{y_3}{y_1}),
\]
that is
\[
\frac{d}{dz}(\frac{y_4}{y_0})=\frac{t^2}{2}\frac{d}{dz}(\frac{y_3}{y_1}),
\]
where $t=y_1/y_0$ . Furthermore $d/dz=dt/dz \cdot d/dt$, implies
\[
\frac{d}{dt}(\frac{y_4}{y_0})=\frac{t^2}{2}\frac{d}{dt}(\frac{y_3}{y_1})=\frac{t^2}{2}\frac{d}{dt}(\frac{1}{t}\frac{y_3}{y_0})=\frac{t}{2}\frac{d}{dt}(\frac{y_3}{y_0})-\frac{1}{2}\frac{y_3}{y_0}. 
\]
It follows
\[
\frac{d^2}{dt^2}(\frac{y_4}{y_0})=\frac{t}{2}\frac{d^2}{dt^2}(\frac{y_3}{y_0})=\frac{t^2}{2}\frac{d^2}{dt^2}(\frac{y_2}{y_0}
) 
\]
by the first identity. 
\end{proof}

\begin{proof}[Proof of (b)]
We have
\[
y_0^2\frac{d}{dz}(\frac{y_5}{y_0})=\frac{y_2^2}{2}\frac{d}{dz}(\frac{y_3}{y_2}). 
\]
As above we get
\[
\frac{d}{dt}(\frac{y_{5}}{y_{0}})=\frac{1}{2}(\frac{y_{2}}{y_{0}})^{2}\frac{d}{dt}(\frac{y_{3}}{y_{2}})=\frac{1}{2}(\frac{y_{2}}{y_{0}})^{2}\frac{d}{dt}(\frac{y_3/y_0}{y_2/y_0})=\frac{1}{2}\left\{\frac{y_{2}}{y_{0}}\frac{d}{dt}(\frac{y_{3}}{y_{0}})-\frac{y_{3}}{y_{0}}\frac{d}{dt}(\frac{y_{2}}{y_{0}})\right\},
\]
which implies
\[
\frac{d^{2}}{dt^{2}}(\frac{y_{5}}{y_{0}})=\frac{1}{2}\left\{ \frac{y_{2}}{y_{0}}\frac{d^{2}}{dt^{2}}(\frac{y_{3}}{y_{0}})-\frac{y_{3}}{y_{0}}\frac{d^{2}}{dt^{2}}(\frac{y_{2}}{y_{0}})\right\} 
=\frac{1}{2}\left\{ \frac{y_{2}}{y_{0}}\frac{y_{1}}{y_{0}}-\frac{y_{3}}{y_{0}}\right\} \frac{d^{2}}{dt^{2}}(\frac{y_{2}}{y_{0}})=\Phi (q)K(q).
\]
\end{proof}

\subsection{Case A}
In this case we find the $7^{th}$ order differential operator
\[
\vartheta^7-128z(2\vartheta+1)^3 (8\vartheta^4+16\vartheta ^3+20\vartheta^2+12\vartheta+3)+2^{20}z^2(\vartheta+1)^3(2\vartheta+1)^2(2\vartheta+3)^2.
\]
To get an explicit formula for $a_n$ one differentiates
\[
a_n=A(n)\sum_{k=0}^{n}\left\{1+k\frac{d}{dk}\right\} A(k)^{2}A(n-k)^2,
\]
obtaining
\[
a_n=\dbinom{2n}{n}^2\sum_{k=0}^n\dbinom{2k}{k}^4\dbinom{2n-2k}{n-k}^{4} \left\{ 1+8k(H_{2k}-H_{k}-H_{2n-2k}+H_{n-k}) \right\}, 
\]
where $H_{0}=0$ and $H_n=1+1/2+\cdots+1/n$ is the harmonic number for $n\geq 1$.

\subsection{Case B}
In this case
\begin{align}
a_n &=\dbinom{2n}{n}\dbinom{3n}{n}\sum_{k=0}^{n}\dbinom{2k}{k}^2\dbinom{3k}{k}^2\dbinom{2n-2k}{n-k}^{2}\dbinom{3n-3k}{n-k}^{2} \nonumber \\
&\times \left\{1+6k(H_{3k}-H_{k}-H_{3n-3k}+H_{n-k})\right\}. \nonumber
\end{align}
The operator corresponding to the differential equation is
\begin{align}
\vartheta^7 &-27z(2\vartheta+1)(3\vartheta+1)(3\vartheta+2)(81\vartheta^4+162\vartheta^3+198\vartheta^2+117\vartheta+28) \nonumber \\
&+3^{12}z^2(\vartheta+1)(3\vartheta+1)(3\vartheta+2)^2(3\vartheta+4)^2(3\vartheta+5). \nonumber
\end{align}
And the instanton numbers
\[
n_1=1485, \, n_2=\frac{9853515}{8}, \, n_3=2555194005, \, n_4=8549298943740, \dots
\]
\subsection{Case C}
Here
\begin{align}
a_n &=\dbinom{2n}{n}\dbinom{4n}{2n}\sum_{k=0}^n\dbinom{2k}{k}^2\dbinom{4k}{2k}^2\dbinom{2n-2k}{n-k}^2\dbinom{4n-4k}{2n-2k}^{2} \nonumber \\ 
& \times \left\{ 1+4k(2H_{4k}-H_{k}-H_{2k}+H_{2n-2k}-2H_{4n-4k}+H_{n-k}) \right\}. \nonumber
\end{align}
And the differential operator is
\begin{align}
\vartheta^7 &-128z(2\vartheta+1)(4\vartheta+1)(4\vartheta+3)(128\vartheta^4+256\vartheta^3+304\vartheta^2+176\vartheta+39) \nonumber \\ 
&+2^{26}z^{2}(\vartheta+1)(2\vartheta+1)(2\vartheta+3)(4\vartheta+1)(4\vartheta+3)(4\vartheta+5)(4\vartheta+7). \nonumber
\end{align}
The instanton numbers are
\[
n_1=29400, \, n_2=277414560, \, n_3=7671739956480, \, n_4=346114703998148120, \dots
\]

\subsection{Case D}
For this case, we have
\begin{align}
a_n &=\dbinom{3n}{n}\dbinom{6n}{3n}\sum_{k=0}^n\dbinom{3k}{k}^{2}\dbinom{6k}{3k}^2\dbinom{3n-3k}{n-k}^2\dbinom{6n-6k}{3n-3k}^2 \nonumber \\ 
& \times \left\{ 1+6k(2H_{6k}-H_{k}-H_{3k}+H_{3n-3k}-2H_{6n-6k}+H_{n-k}) \right\}. \nonumber
\end{align}
The corresponding Calabi-Yau operator is
\begin{align}
\vartheta^7 & -2^7 3^3 z(2\vartheta+1)(6\vartheta+1)(6\vartheta+5)(648\vartheta^4+1296\vartheta^3+1476\vartheta^2+828\vartheta+155) \nonumber \\ &+ 2^{20}3^{12}z^2(\vartheta+1)(3\vartheta+1)(3\vartheta+2)(6\vartheta+1)(6\vartheta+5)(6\vartheta+7)(6\vartheta+11), \nonumber
\end{align}
with instanton numbers
\begin{align}
& n_1=17342208, \, n_2=42976872163296, \, n_3=380850322188446486784, \nonumber \\
& n_4=5581133974953140362085043072, \dots \nonumber
\end{align}
The four differential equations $A$, $B$, $C$ and $D$ were first found by Dettweiler and Reiter
using a different method. They are particular cases up to rescaling $z \to \lambda z$ of the operator $P_1$ in \cite[p. 15]{DeRe} for $c=d=0$, by putting $a=1/2,1/3,1/3,1/6$ respectively.

\subsection{Transformation of case A}
We make the transformation
\[
Y_0(z)=\frac{2^{1/4}}{(1-512z+\sqrt{1-1024z})^{1/4}}\sum_{n=0}^{\infty}\dbinom{2n}{n}^4 \left(-\frac{2z}{1-512z+\sqrt{1-1024z}}\right)^n.
\]
Then $Y_0(z)$ satisfies
\[
\vartheta^4-16z(4\vartheta+1)(32\vartheta^3+40\vartheta^2+28\vartheta+7)+2^{12}z^2(4\vartheta+1)(4\vartheta+3)^2(4\vartheta+5), 
\]
which is $\# 31$. The wronskian $z(Y_{0}Y_{1}^{'}-Y_{0}^{'}Y_{1})$ satisfies 
\begin{align}
\vartheta^5 &-32z(2\vartheta+1)(48\vartheta^4+96\vartheta^3+124\vartheta^2+76\vartheta+21)+2^{18}z^2(\vartheta+1)^3(12\vartheta^2+24\vartheta+23) \nonumber \\
&-2^{29}z^3(\vartheta+1)^2(\vartheta+2)^2(2\vartheta+3). \nonumber
\end{align}
Taking the Hadamard product with $\dbinom{2n}{n}^{2}$, we obtain
\begin{align}
\vartheta^7 &-128z(2\vartheta+1)^3(48\vartheta^4+96\vartheta^3+124\vartheta^2+76\vartheta+21) \nonumber \\
&+2^{22}x^2(\vartheta+2) (2\vartheta+1)^2(2\vartheta+3)^2(12\vartheta^2+24\vartheta+23) \nonumber \\
&-2^{35}z^3(2\vartheta+1)^2(2\vartheta+3)^3(2\vartheta+5)^2, \nonumber
\end{align}
with
\[
n_1=768, \, n_2=-136800, \, n_3=35597568, \, n_4=-5313408000, \dots. 
\]
Trying to make a similar transformation of cases $B,C,D$ does not work. We finally mention that S. Reiter sent us ten other examples of operators in $\mathcal{YY}_7$ (unpublished).

\section{Conclusion}
Calabi-Yau differential equations are important by themselves and their interest in relation with the series for $1/\pi$ and $1/\pi^2$ of Ramanujan-Sato type was explained in \cite{AlGu2}. A recent interesting idea related to this relation is in \cite{GuZu}, and in some cases it leads to completely modular-free proofs of Ramanujan-type series for $1/\pi$. Finally we want to observe that we have noticed an error in \cite[Table-$\delta$]{AlGu}: We suspected that the numbers $e$ and $f$ of that table were wrong because they lead to non-integers values of the $\ell$-numbers. We now know that we were right, and that the error was due to the fact that for the case $A*\delta$, we confused a ``divergent" series with the limit case at the radius of convergence. In fact we have discovered that instead of a limit case, it was the ``divergent" series associated to the supercongruences
\[
\sum_{n=0}^{p-1}\binom{2n}{n}^2\sum_{k=0}^{n}(-1)^{k}3^{n-3k}\binom{n}{3k}\binom{n+k}{n}\frac{(3k)!}{k!^{3}}(39+172n+204n^{2})\frac{1}{(-64)^{n}} \equiv 39p^2 \pmod{p^3},
\]
where $p>3$ is a prime number. Although we are unable to determine the true values of $\ell_1$, $\ell_2$, $\ell_3$ of this bad case, we guessed that $h=-\ell_3/\ell_1=9/2$ which is all we need to find the formulas for $1/\pi^2$ (see \cite{AlGu} and \cite{AlGu2}).

\end{document}